\numberwithin{equation}{section}
\numberwithin{subsection}{section}
\newtheorem*{namedtheorem}{\theoremname}
\newcommand{\theoremname}{testing}
\newtheorem{theorem}{Theorem}
\newtheorem{proposition}{Proposition}
\newtheorem{proposition-definition}[subsection]
{Proposition-Definition}
\newtheorem{corollary}[theorem]{Corollary}
\theoremstyle{definition}
\newtheorem{definition}[subsubsection]{Definition}
\newtheorem{example}[subsubsection]{Example}
\newtheorem{remark}{Remark}
\newtheorem*{definition*}{Definition}
\newcommand\cA{\mathcal{A}}
\newcommand\cM{\mathcal{M}}
\newcommand\cO{\mathcal{O}}
\newcommand\cT{\mathcal{T}}
\newcommand\cX{\mathcal{X}}
\newcommand\CC{\mathbb{C}}
\newcommand\HH{\mathbb{H}}
\newcommand\PP{\mathbb{P}}
\newcommand\QQ{\mathbb{Q}}
\newcommand\RR{\mathbb{R}}
\renewcommand\SS{\mathbb{S}}
\newcommand\VV{\mathbb{V}}
\newcommand\ZZ{\mathbb{Z}}
\newcommand\fS{\mathfrak{S}}
\newcommand\fW{\mathfrak{W}}
\newcommand\frg{\mathfrak{g}}
\newcommand\fro{\mathfrak{o}}
\newcommand\frs{\mathfrak{s}}
\newcommand\arr{\ifinner\to\else\longrightarrow\fi}
\def\displaytimes_#1{\mathrel{\mathop{\times}\limits_{#1}}}
\def\displayotimes_#1{\mathrel{\mathop{\bigotimes}\limits_{#1}}}
\newcommand\doublelong[2]{\mathbin{\xymatrix{{}\ar@<3pt>[r]^{#1}
\ar@<-3pt>[r]_{#2}&}}}
\newlength{\ignora}
\newcommand\Sym{\operatorname{Sym}}
\newcommand\GL{\operatorname{GL}}
\newcommand\Char{\textrm{Char}}
\newcommand\SL{\textrm{SL}}
\theoremstyle{plain}
\theoremstyle{definition}
\begin{document}

\title[Character Formula]
{Character Formulas on Cohomology of Deformations of Hilbert Schemes of $K3$ Surfaces}

\author{Letao Zhang}
\address{Department of Mathematics\\
Stony Brook University\\
Stony Brook, NY 11794\\
U.S.A.} \email{letao.zhang@stonybrook.edu}
\begin{abstract}
Let $X$ be a hyperk\"{a}hler manifold deformation equivalent to Hilbert scheme of $n$ points on a $K3$
surface. 
We compute the graded character formula of the generic  Mumford-Tate group representation on the cohomology ring of $X$, and derive a
 generating series for deducing the number of canonical Hodge
classes on $X$.  The formula indicates the number of Hodge classes on $X$ that remain Hodge under any deformation.
\end{abstract}

\maketitle
%%%%%%%%%%%%%%%%%%%%%%%%%%%%%%%%%%%%%%%%%%%%%%start section
%
%
%
%

{\tiny{\textit{2010 Mathematics Subject Classification }{{14Q15 (primary)}, 14J28, 14C05, 14C25 (secondary)}}}
\tableofcontents

\section{Introduction}\label{sec:introduction}

Let $S$ be a $K3$ surface, and $S^{[n]}$  the Hilbert scheme of $n$
points on $S$; each point in $S^{[n]}$ corresponds to a subscheme of $S$ whose
Hilbert polynomial is the constant $n$. We say $X$ is of $K3^{[n]}$-type if $X$ is hyperk\"{a}hler and 
deformation equivalent to $S^{[n]}$.

Denote by $G_X$ the generic Mumford-Tate group of hyperk\"{a}hler manifolds of $K3^{[n]}$-type. The invariants of 
$G_X$ action on $H^*(X,\QQ)$ correspond to the \emph{canonical Hodge
classes} (see Section \ref{sec:groupcoh}), which are Hodge classes that remain Hodge under any deformation.
  Chern classes of the tangent bundle $\cT_X$
are examples of such canonical Hodge classes.

Now we consider the action of $G_X$ on the cohomology of $X$. In particular, we want to compute 
 the characters of $G_X$ representation on the middle
cohomology of $X$, where $X$ is of $K3^{[n]}$-type.

 The lattice of $H^2(X,\ZZ)$ -- with respect to the Beauville-Bogomolov form --  is 
$\Lambda_S\oplus\delta\ZZ$. Here $\Lambda_S:= U^3\oplus E_8^2(-1)$  is the lattice of $H^2(S,\ZZ)$ where $S$ is a $K3$ surface, and $\left(\delta,\delta\right) = -2(n-1)$.
 Let $G_S$ be the identity component of  $O^+(H^2(S,\QQ))$ with respect to the intersection form. The action of maximal torus $T_X$ of 
$G_X$  on $\Lambda_S$ is the same as the action of maximal torus  $T_S$ of $G_S$ on $\Lambda_S$.

\begin{theorem}\label{thm:middim}
Let $M(q) :=\sum_{n=0}^{\infty}\Char(H^{2n}(X,\QQ))\cdot q^n $ be
the generating series for the character of the $G_X$  representation on the middle cohomology of
$X$. 
$$M(q)= \left({1+\sum_{k=1}2(-1)^k
q^{\frac{k(k+1)}{2}}}\right) \left({\prod_{m=1}\det(I_{24}-gq^m)}
\right)^{-1},$$
where $g\in T_X$ the maximal torus of $G_X$, $I_N$ is a $N\times N$ identity
matrix, and $\det(I_{24}-gt^m)=(1-t^m)^2\det(I_{22}-g|_{T_S}t^m)$.
\end{theorem}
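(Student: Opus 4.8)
The plan is to transport the question to the Hilbert scheme, feed in the Fock-space description of its cohomology (the character refinement of Göttsche's formula), cut out the middle degree by a parity condition on the Nakajima generators, and finish with a classical $q$-series identity that manufactures the theta prefactor.

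First I would use deformation invariance: since $X$ is a Kähler deformation of $S^{[n]}$, its cohomology ring together with its Hodge structure coincides with that of $S^{[n]}$, compatibly with the identification of Mumford--Tate groups, so $\Char\bigl(H^{2n}(X,\QQ)\bigr)=\Char\bigl(H^{2n}(S^{[n]},\QQ)\bigr)$ as a character of $G_X$, and it suffices to compute $\sum_{n\ge 0}\Char\bigl(H^{2n}(S^{[n]},\QQ)\bigr)q^n$. Then I would invoke the Nakajima--Grojnowski--Göttsche--Soergel description
$$\bigoplus_{n\ge 0}H^*(S^{[n]},\QQ)\;\cong\;\Sym^{\bullet}\Bigl(\bigoplus_{m\ge 1}H^*(S,\QQ)\,q^m\Bigr),$$
an isomorphism of graded rings compatible with Hodge structures, hence $G_X$-equivariant, in which the creation operator $\mathfrak p_{-m}(\alpha)$ attached to $\alpha\in H^i(S)$ raises the cohomological degree on $S^{[n]}$ by $2(m-1)+i$. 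Taking $G_X$-characters and using $\sum_{r\ge 0}\Char(\Sym^rV)t^r=\det(I-gt\mid V)^{-1}$ upgrades Göttsche's Betti-number formula to $\sum_{n}\Char\bigl(H^*(S^{[n]},\QQ)\bigr)q^n=\prod_{m\ge 1}\det(I_{24}-g q^m)^{-1}$, with $g$ acting on the $24$-dimensional space $H^*(S,\QQ)=H^0\oplus H^2\oplus H^4$ through $g|_{H^2}$ and with the two extra eigenvalues $z^{\mp 2}$ on $H^0,H^4$, exactly as in the statement.

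Next I would isolate the middle degree. By the degree shift above, the monomial $\mathfrak p_{-\lambda_1}(\alpha_1)\cdots\mathfrak p_{-\lambda_\ell}(\alpha_\ell)\lvert 0\rangle$ with $\sum_j\lambda_j=n$ lies in $H^{2n}(S^{[n]},\QQ)$ precisely when $\sum_j\deg\alpha_j=2\ell$, and since $\deg\alpha_j\in\{0,2,4\}$ this holds if and only if the monomial uses as many factors from $H^0(S)$ as from $H^4(S)$. Splitting the three ``colours'' accordingly, the $H^2$-insertions contribute $\prod_{m}\det(I_{22}-g|_{H^2}q^m)^{-1}$ with no further constraint, while the $H^0$- and $H^4$-insertions contribute $\prod_m(1-z q^m)^{-1}$ and $\prod_m(1-z^{-1}q^m)^{-1}$ with $z$ now recording the difference of the two counts; the middle-degree condition is the vanishing of that difference, i.e.\ extraction of the constant term in $z$. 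This yields
$$M(q)=\Bigl(\operatorname{CT}_{z}\prod_{m\ge 1}\frac{1}{(1-z q^m)(1-z^{-1}q^m)}\Bigr)\cdot\prod_{m\ge 1}\det\bigl(I_{22}-g|_{H^2}q^m\bigr)^{-1}.$$

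Finally, by Euler's identity $\prod_{m\ge 1}(1-zq^m)^{-1}=\sum_{a\ge 0}z^aq^a/(q)_a$ with $(q)_a:=\prod_{j=1}^a(1-q^j)$, the constant term equals $\sum_{a\ge 0}q^{2a}/(q)_a^{2}$, and the proof closes with the $q$-series identity
$$\sum_{a\ge 0}\frac{q^{2a}}{(q)_a^{2}}=\Bigl(1+\sum_{k\ge 1}2(-1)^kq^{k(k+1)/2}\Bigr)\prod_{m\ge 1}(1-q^m)^{-2},$$
which one establishes from the Jacobi triple product (equivalently via a Bailey-pair manipulation), the partial-theta right-hand side appearing when one extracts the constant term from the reciprocal of the theta function $\sum_{n}(-1)^nz^nq^{\binom n2}$. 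Substituting this in and reabsorbing $\prod_m(1-q^m)^{-2}$ into the $H^0\oplus H^4$ block of $\det(I_{24}-gq^m)$ gives the asserted formula for $M(q)$. I expect the two pressure points to be the second step --- checking $G_X$-equivariance of the Fock-space isomorphism so that Göttsche's formula genuinely refines to characters --- and the last step, the $q$-series identity, which is exactly where the prefactor $1+\sum_{k\ge 1}2(-1)^kq^{k(k+1)/2}$ is born and where the normalisation convention for the weight character (matching the constant term's $(1-q^m)^2$ against the statement's $(1-z^{-2}q^m)(1-z^2q^m)$) has to be tracked with care.
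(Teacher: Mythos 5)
Your proposal is correct and follows essentially the same route as the paper: the paper likewise first establishes the bigraded product formula $\sum_n p(z)_n t^n=\prod_m\det(I_{24}-gt^m)^{-1}$ (obtained via the Lehn--Sorger $\Sym^{\bullet}$ description and Molien's formula rather than Nakajima operators, but with identical content), then extracts the weight-zero (constant-in-$z$) coefficient of $\prod_m(1-q^m)^2/\bigl((1-z^2q^m)(1-z^{-2}q^m)\bigr)$, which it asserts equals $a_0(q)-a_1(q)=1+2\sum_{k\ge1}(-1)^kq^{k(k+1)/2}$ --- precisely the partial-theta identity you reduce to via Euler's expansion. If anything, your write-up is more explicit about the two points the paper leaves implicit, namely the $G_X$-equivariance of the product decomposition and the proof of the constant-term identity.
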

%, and the coefficient of each $q^n$ will correspond to the number of canonical Hodge classes.

\begin{example}\label{eg:hodge7}
Let $X$ be of $K3^{[7]}$-type. 
There are 7 Hodge classes in $H^{14}(X,\QQ)$ that remain Hodge under any deformation.
Similarly, there are 5 Hodge classes in $H^8(X,\QQ)$,  5 Hodge classes in $H^{10}(X,\QQ)$,
and 10 Hodge classes  in $H^{12}(X,\QQ)$ 
 that  remain Hodge under any deformation. (cf. Appendix \ref{app:t1})
\end{example}

Now let $l\in H_2(X,\ZZ)$ be a line class in
$\PP^{n}\subset X$. Hassett and Tschinkel in
 \cite{HaTs10} show that  $(l,l)=-\frac{5}{2}$
 for the case where $n=2$ in \cite{HaTs09}.
  For $n=3$, Harvey, Hassett and Tschinkel \cite{Br10} show that $(l,l)=-3$
  and give a concrete expression for the Lagrangian hyperplane class. For the case
of $n=4$, Bakker and Jorza \cite{BAJO11} show that
$(l,l)=-\frac{7}{2}$, and also give an expression for $[\PP^4]$. For $n\geq 5$, Bakker \cite{BA13} 
shows that $(l,l) = -\frac{n+3}{2}$, which was conjectured in \cite{HaTs10}. However,
it is more difficult to compute the class $[\PP^n]$ for larger $n$.
 One possible approach to exploring the expression for $[\PP^n]$  is to find all the canonical Hodge classes in the middle
cohomology of $X$ for each $n$; future work could provide possible
candidates for the class of $[\PP^{n}]$ in terms of the line class.
As for the ring structure, Verbitsky \cite{Ver95} %page 8
 shows that there
is an embedding $\Sym^nH^2(S,\QQ)\hookrightarrow
H^{2n}(S^{[n]},\QQ)$, but much about the ring structure of
$H^*(X,\QQ)^{G_X}$ is still unknown, e.g. relations in the
subalgebra generated by $H^*(S,\QQ)$ for each $H^*(X,\QQ)^{G_X}$.\\

 {\bf Acknowledgments}.
 I am very grateful to my advisor Brendan Hassett for introducing me to this
 problem, and for his warm support and encouragement.
 Thanks are also due to Lothar G\"{o}ttsche, Radu Laza, and Anthony V\'{a}rilly-Alvarado
 for interesting discussions and insightful
 remarks. I  appreciate Eyal Markman's illuminating questions which may lead to future research topics. \\
The writing of this paper was supported in part by NSF grant 0901645 and
0968349.
%%%%%%%%%%%%%%%%%%%%%%%%%%%%%%%%%%%%%%%%%%%%%%new section
%
%
%
%
\section{Cohomology of Hilbert Schemes of Points on $K3$ surfaces}\label{sec:2}
%We always denote by $S$ a $K3$ surface and $X$ by deformation of
%$S^{[n]}$ $(n\geq 2)$, the corresponding Hilbert scheme of $n$
%points.
In this section, we review some classical results about
$S^{[n]}$, where $S$ is a $K3$ surface. For $n>1$, the Beauville-Bogomolov form can be written as the direct sum \cite{Be83}
\begin{equation}\label{eq:decomp2,2}
 H^2(S^{[n]},\ZZ)= H^2 (S, \ZZ)_{\left(~,~\right)} \oplus_{\perp} \ZZ\delta,
 ~(\delta,\delta)=-2(n-1),
\end{equation}
%where the decomposition is given by an isomorphism $H^2 (S,\ZZ)\cong H^2(\Sym^n S,\ZZ)$,
 where $\left(~,~\right)$ is the intersection form on $H^2 (S, \ZZ)$, and
  $2\delta$ is the class of the corresponding big diagonal divisor
$\Delta^{[n]}\subset S^{[n]}$ parameterizing nonreduced subschemes.

In \cite{Heisen}, Nakajima constructs generators for the cohomology
ring of Hilbert schemes of points of any projective surface. Lehn
and Sorger  \cite{LS03} then show how $H^*(S,\QQ)$ generates
$H^*(S^{[n]},\QQ)$ as a graded ring. 

Let $A=H^*(S,\QQ)[2]$ denote the shifted cohomology ring weighted by
-2, 0, 2. Correspondingly, let $\HH_n=H^*(S^{[n]},\QQ)[2n]$ denote
the shifted cohomology ring weighted by $-2n,...,2n$. Note that the
weight shifting here is not the Tate twist notation for Hodge
classes.

Define a linear form $T$ on $A$ by $T(a):=-\int_{[S]}a$, and let $\left<,\right>$ be the 
induced bilinear form on the shifted cohomology $\left<a_1,a_2\right> = T(a_1a_2) = -\int_S a_1a_2$.
%and on $\HH_n$ by $T(a):=(-1)^n\int_{S^{[n]}}a$. 
On $A^{\otimes n}$ , 
 one  can define an analogous structure.  Since $A$ and $\HH_n$ have only graded pieces of \textit{even} 
weights, we can simplify the algebraic model in \cite{LS03}.

The product is given by
$$(a_1\otimes\cdots\otimes a_n)\cdot (b_1\otimes\cdots\otimes b_n)=(a_1b_1)\otimes\cdots\otimes(a_nb_n)~.$$
$T$ extends to $A^{\otimes n}$ via
$$T(a_1\otimes\cdots\otimes a_n)=T(a_1)\cdot\cdots\cdot T(a_n)~,$$
and the bilinear form $\left<,\right>$ on $A^{\otimes n}$ is defined accordingly:
$$\left<a,b\right> = T(a)T(b).$$
We also have the symmetric group $\fS_n$ action on the $n-$fold tensor given
by
$$\pi(a_1\otimes\cdots\otimes a_n)=a_{\pi^{-1}(1)}\otimes\cdots\otimes a_{\pi^{-1}(n)}~.$$
For any partition $n=n_1+\cdots+n_k$, we have a homomorphism
\begin{align*}
A^{\otimes n}                   &\rightarrow A^{\otimes k}\\
a_1\otimes\cdots\otimes a_n     &\mapsto(a_1\cdots
a_{n_1})\otimes\cdots\otimes(a_{n_1+\cdots+n_{k-1}+1}\cdots a_{n_k})
\end{align*}
Given a finite set $I$ with $n$ elements, let $\{A_i\}_{i\in I}$ be
a family of copies of $A$ indexed by $I$. Let $[n]$ denote
$\{1,\dots,n\}$; we define
$$A^{\otimes I}:=\left(\bigoplus_{f:[n]\xrightarrow\cong I}A_{f(1)\otimes\cdots\otimes f(n)}\right)/\fS_n$$
Finally, given a surjection $\phi:I\rightarrow J$ between two index
sets, there is an induced multiplication
$$\phi^*:A^{\otimes I}\rightarrow A^{\otimes J}~,$$
 and let
$$\phi_*:A^{\otimes J}\rightarrow A^{\otimes I}$$ be the \textit{adjoint} of
$\phi^*$, i.e. 
$$\left<\phi^*a,b\right>= \left<a,\phi_* b\right>,$$
where $a\in A^{\otimes I}, b\in A^{\otimes^J}$. The projection formula
$$\phi_*(a\cdot \phi^*(b))=\phi_*(a)\cdot b~$$
 holds by \cite{LS03}.

Denote by $\left<\pi\right>\backslash[n]$ the set of orbits of $[n]$
under the action of $\pi$. Define
$$A\{\fS_n\}:=\oplus_{\pi\in\fS_n}A^{\otimes\left<\pi\right>\backslash[n]}\cdot\pi$$
 $A\{\fS_n\}$ admits an action of $\sigma\in\fS_n$, induced by the bijection
$$\sigma: \left<\pi\right>\backslash[n]
\rightarrow\left<\sigma\pi\sigma^{-1}\right>\backslash[n],~~ x
\mapsto\sigma x. $$ This gives an automorphism of $A\{\fS_n\}$ given
by
$$\widetilde{\sigma}: a\cdot\pi
\mapsto \sigma^*(\sigma\pi\sigma^{-1}).$$ Denote by $A^{[n]}$ the
invariants under this action, then we have the graded isomorphism
between the vector spaces \cite{LS03}
$$A^{[n]}=\sum_{\|\alpha\|=n}\bigotimes_i\Sym^{\alpha_i}A$$
where $\alpha=(1^{\alpha_1},2^{\alpha_2},\dots,n^{\alpha_n})$ runs
all partitions of $n$ and $\|\alpha\|=\sum_{i=1}^{n}i\alpha_i$. For the case of $K3$ surfaces,
Lehn and Sorger prove
\begin{theorem}\label{thm:lehnsoger}\cite{LS03} There is a canonical isomorphism of
graded rings
$$(H^*(S,\QQ)[2])^{[n]} \xrightarrow\cong H^*(S^{[n]},\QQ)[2n]~.$$
\end{theorem}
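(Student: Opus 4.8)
The plan is to carry both sides on one and the same graded vector space and then reduce the statement to an identification of two descriptions of a single cup-product operator. For the underlying vector space, I would invoke Nakajima's theorem \cite{Heisen}: $\bigoplus_{n\ge 0}H^*(S^{[n]},\QQ)$ is the Fock space freely generated from the vacuum by the creation operators $\frq_i(\gamma)$ ($i\ge 1$, $\gamma\in H^*(S,\QQ)$), with $\prod_j\frq_{i_j}(\gamma_j)\,|0\rangle$ lying in $H^*(S^{[n]},\QQ)$ for $n=\sum_j i_j$. On the algebraic side, the recalled description $A^{[n]}\cong\sum_{\|\alpha\|=n}\bigotimes_i\Sym^{\alpha_i}A$ presents $\bigoplus_n A^{[n]}$ as the same Fock space, the partition $\alpha$ recording the multiset $\{i_j\}$ and the symmetric tensors recording the decorating classes. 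Matching the two indexings and checking that degrees agree (equivalently, Göttsche's Betti-number formula) yields a canonical bigraded isomorphism $A^{[n]}\xrightarrow{\,\cong\,}H^*(S^{[n]},\QQ)[2n]$; the remaining task is to see that it is multiplicative.

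Next I would control the geometric cup product through Lehn's operator. After a theorem of Lehn, cup product on $H^*(S^{[n]},\QQ)$ with the first Chern class of a tautological bundle $\cO^{[n]}_S$ --- equivalently, up to the evident action of $A^{\otimes n}$, with the boundary class $\delta$ --- is given by an explicit operator $\frd$ on Fock space, polynomial in the $\frq_i(\gamma)$ and assembled from the multiplication, comultiplication and Euler class of $H^*(S,\QQ)$ together with the incidence loci $S^{[n,n+1]}$. Since the cohomology ring is generated by tautological classes, $\frd$ together with the $A^{\otimes n}$-action determines the whole ring structure. This is where the $K3$ hypothesis enters decisively: because $c_1(S)=0$, the $c_1(S)$-term in Lehn's formula drops out and the only surviving structure constant is the Euler class $c_2(S)=24\,[\mathrm{pt}]\in H^4(S,\QQ)$.

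Then I would run the parallel computation on the algebraic side and match. By construction the product on $A^{[n]}$ is the $\fS_n$-invariant part of a convolution product on $A\{\fS_n\}$, carrying a distinguished degree-two element --- the image of the boundary class --- multiplication by which is again an explicit operator whose only structure constant is $\varepsilon=\sum_i x_i x^i\in A$, equal (up to sign) to $c_2(S)$ for $A=H^*(S,\QQ)[2]$, the absence of odd-degree corrections reflecting $c_1(S)=0$. I would then check that under the fixed vector-space identification $\frd$ corresponds to this algebraic boundary operator and the two $A^{\otimes n}$-module structures agree; as these data generate both rings, the identification is a ring homomorphism on a generating set, and an induction on $n$ --- transporting relations along the maps $\phi^*,\phi_*$, which are realised geometrically by $S^{[n,n+1]}$ --- upgrades it to a ring isomorphism. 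The isomorphism is canonical because no basis of $H^*(S,\QQ)$ is ever chosen. The hard part is the last step: the term-by-term comparison of Lehn's operator $\frd$ with multiplication by the boundary element of $A^{[n]}$. This is the technical core of \cite{LS03}, and it is precisely there that the graded-Frobenius axioms for $(H^*(S,\QQ)[2],T)$ and the vanishing $c_1(S)=0$ are used in an essential way; the rest is bookkeeping with the Fock-space grading and partition combinatorics.
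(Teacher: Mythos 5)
The paper does not actually prove this statement: Theorem \ref{thm:lehnsoger} is quoted from Lehn--Sorger \cite{LS03} as background in Section \ref{sec:2}, so there is no in-paper argument to compare yours against. Taken as an outline of the cited proof, your plan is faithful to what \cite{LS03} actually does: the additive identification of both sides with the Nakajima--Grojnowski Fock space, the reduction of the multiplicative structure to Lehn's boundary operator $\frd$ (cup product with $c_1(\cO_S^{[n]})$, i.e.\ with $-\delta$ up to convention), the essential use of $c_1(S)=0$ to kill the canonical-class term in Lehn's formula so that the only surviving structure constant is the Euler class $e(S)=c_2(S)=24\,[\mathrm{pt}]$, and the final comparison with the algebraic boundary element of $A^{[n]}$ built from the Frobenius structure $(A,T)$.

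That said, what you have written is a plan rather than a proof, and the two steps you defer are exactly where all the content lives. First, your assertion that ``$\frd$ together with the $A^{\otimes n}$-action determines the whole ring structure'' is itself a theorem requiring proof: one needs that $H^*(S^{[n]},\QQ)$ is generated by classes reachable from the $\frq_1(\gamma)$'s under repeated application of $\operatorname{ad}(\frd)$ (equivalently, by tautological classes), which in \cite{LS03} comes out of the commutation relation producing $\frq_{m+1}$ from $[\frd,\frq_m(\gamma)]$; you invoke generation without indicating how it is established. Second, the ``term-by-term comparison'' of $\frd$ with multiplication by the boundary element of $A^{[n]}$ is the technical core and occupies the bulk of \cite{LS03}; announcing that it works is not a proof of it. So as a self-contained argument your proposal has a genuine gap --- but the gap coincides precisely with the content of the cited reference, which the paper under review also treats as a black box, so for the purposes of this paper your level of detail is not a defect.
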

%
    
%%%%%%%%%%%%%%%%%%%%%%%%%%%%%%%%%%%%%%%%%%%%%%new section
%
%
%
%
\section{Decomposition of Cohomology Ring}
In this section, we first review some useful results from representation theory. We then discuss generic Mumford-Tate group actions on the cohomology ring of $X$ of $K3^{[n]}$-type. Finally, we introduce canonical Hodge classes as the invariants of the group action.  Our goal is to decompose $H^*(X,\QQ)$ into irreducible representations and to count invariants.

%%%%%%%%%%%%%%%%%%%%%%%%%%%%%%%%%%%%%%%%%%%%%%new subsection
\begin{subsection}{Characters of Representations}
We summarize general results on representations of complex (or
split) orthogonal groups from \cite{rep}.

Let $\frg$ be a semisimple Lie algebra, $\Lambda$ be its weight
lattice, and $\ZZ[\Lambda]$ be the integral group ring of the
abelian group $\Lambda$. For each weight $\lambda\in\Lambda$, let
$e(\lambda)$ denote the basis element in $\ZZ[\Lambda]$, so that each
element in $\ZZ[\Lambda]$ can be written as the finite sum $\sum_\lambda
n_\lambda\cdot e(\lambda)$. Denote by $R(\frg)$ the ring of
isomorphism classes of finite-dimensional representations associated
to $\frg$. For each class $[V]$, $[V]=[V']+[V'']$ whenever
$V=V'\oplus V''$, and the product of two classes is defined as
$[V]\cdot[W]=[V\otimes W]$. Define the character homomorphism
\[\Char:R(g)\rightarrow\ZZ[\Lambda]\]
by $\Char[V]=\sum\#(V_\lambda)\cdot e(\lambda)$, where $V_\lambda$
is the weight space of $V$ for the weight $\lambda$ and
$\#(V_\lambda)$ is the multiplicity of $V_\lambda$ in $V$.  The Weyl
group $\fW$ acts on $\ZZ[\Lambda]$ and the image of $\Char$ is
contained in the ring of invariants $\ZZ[\Lambda]^\fW$.

Let $\omega_1,\dots,\omega_n$ be fundamental weights of $\frg$.
Recall that fundamental weights have the property that any highest
weight may be expressed uniquely as a nonnegative integral linear
combination of them; they are free generators for the lattice
$\Lambda$. Let $\Gamma_i~(i=1,\dots,n)$ be the classes in $R(\frg)$
of the irreducible representations of highest weight
$\omega_i~(i=1,\dots,n)$. We have the following theorem.
\begin{theorem}\label{thm:character}\cite{rep}
The representation ring $R(\frg)$ is a polynomial ring on the
variables $\Gamma_1,\dots,\Gamma_n$, and the homomorphism
$\Char:R(\frg)\rightarrow\ZZ[\Lambda]^\fW$ is an isomorphism.
\end{theorem}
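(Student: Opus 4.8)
The plan is to deduce both assertions at once from the theorem of the highest weight together with a triangularity argument for the dominance order. Recall that this theorem provides a bijection $\lambda \mapsto \Gamma_\lambda$ between dominant weights $\lambda \in \Lambda$ and isomorphism classes of irreducible finite-dimensional $\frg$-representations (so $\Gamma_{\omega_i} = \Gamma_i$), and that $R(\frg)$ is, by semisimplicity of the representation category, the free abelian group on the classes $\Gamma_\lambda$; hence it suffices to understand the elements $\Char(\Gamma_\lambda)$. Write $\mu \preceq \lambda$ when $\lambda - \mu$ is a nonnegative integral combination of the simple roots. The first input I would record is the standard structural fact $\Char(\Gamma_\lambda) = e(\lambda) + \sum_{\mu \prec \lambda} d_{\lambda\mu}\, e(\mu)$ with $d_{\lambda\mu} \in \ZZ_{\geq 0}$, together with the finiteness lemma that for a fixed dominant $\lambda$ there are only finitely many dominant $\mu$ with $\mu \preceq \lambda$; the latter is what makes inductions on $\preceq$ well founded.

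Next I would pass to the orbit-sum basis of $\ZZ[\Lambda]^\fW$: since each $\fW$-orbit on $\Lambda$ meets the dominant chamber in exactly one point, the orbit sums $m_\lambda := \sum_{\nu \in \fW\lambda} e(\nu)$, indexed by dominant weights $\lambda$, form a $\ZZ$-basis of $\ZZ[\Lambda]^\fW$. Because $\Char(\Gamma_\lambda)$ is $\fW$-invariant, collecting its monomials into $\fW$-orbits and applying the structural fact gives $\Char(\Gamma_\lambda) = m_\lambda + \sum_{\mu \prec \lambda,\ \mu \text{ dominant}} c_{\lambda\mu}\, m_\mu$. Ordering the dominant weights compatibly with $\preceq$, the transition matrix from $\{\Gamma_\lambda\}$ to $\{m_\lambda\}$ is therefore unitriangular over $\ZZ$, and by the finiteness lemma invertible over $\ZZ$. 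This simultaneously shows that $\Char$ is injective (the $\Char(\Gamma_\lambda)$ are linearly independent) and that its image is all of $\ZZ[\Lambda]^\fW$ (they span it), so $\Char$ is a ring isomorphism onto $\ZZ[\Lambda]^\fW$.

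For the polynomial structure I would rerun the same bookkeeping with the monomial spanning set. For $a = (a_1, \dots, a_n) \in \NN^n$ put $\lambda(a) := \sum_i a_i \omega_i$; since the fundamental weights freely generate the monoid of dominant weights, $a \mapsto \lambda(a)$ is a bijection onto the dominant weights. The monomial $\Gamma^a := \prod_i \Gamma_i^{a_i}$ satisfies $\Char(\Gamma^a) = \prod_i \Char(\Gamma_i)^{a_i}$, and because the highest weight occurring in a tensor product of irreducibles is the sum of the highest weights, occurring with multiplicity one, the top orbit sum appearing in $\Char(\Gamma^a)$ is $m_{\lambda(a)}$ with coefficient $1$, all others being strictly lower. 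Thus $\{\Char(\Gamma^a)\}_{a \in \NN^n}$ is again $\ZZ$-unitriangular relative to $\{m_\lambda\}$, hence a $\ZZ$-basis of $\ZZ[\Lambda]^\fW$; pulling back along the isomorphism $\Char$, the monomials $\{\Gamma^a\}_{a}$ form a $\ZZ$-basis of $R(\frg)$, which is exactly the assertion that $R(\frg) = \ZZ[\Gamma_1, \dots, \Gamma_n]$ is a polynomial ring.

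The genuinely deep inputs — the theorem of the highest weight and the computation of the highest weight of a tensor product — I would cite from \cite{rep} rather than reprove; granting them, the remaining work is purely combinatorial. The one point to handle with care is the finiteness lemma, which guarantees that the dominance order restricted to dominant weights below a fixed weight admits no infinite descending chains, so that ``unitriangular'' really does force ``invertible over $\ZZ$''; this is where I expect any subtlety to lie, and it follows from the elementary estimate that $\mu \preceq \lambda$ with both weights dominant forces $\|\mu\| \leq \|\lambda\|$ in a $\fW$-invariant norm, leaving only finitely many lattice points.
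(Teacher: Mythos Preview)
Your argument is correct and is essentially the standard proof of this classical result: highest-weight theory gives the $\ZZ$-basis $\{\Gamma_\lambda\}$ of $R(\frg)$, the orbit sums $m_\lambda$ give a $\ZZ$-basis of $\ZZ[\Lambda]^\fW$, and unitriangularity with respect to the dominance order does the rest, both for the isomorphism and for the polynomial-ring statement. The finiteness lemma you flag is indeed the one subtle point, and your norm argument handles it.

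There is nothing to compare against in the paper itself: the theorem is stated with a citation to \cite{rep} and no proof is supplied, as it is being quoted as background. Your write-up is in fact the argument one finds in that reference (Fulton--Harris, Lecture~23), so you have reconstructed exactly what the paper is deferring to.
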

Thus decomposing $V$ into irreducible $\frg$ representations is
equivalent to finding its character polynomial.

\begin{example}\cite{rep}\label{eg:so22}
Let $\frg=\frs\fro_{2n}\CC$ and $V\cong\CC^{2n}$ be its standard
representation. Its weight lattice $\Lambda$ is
span$\left\{L_1,\dots,L_{n},\left(\sum L_i\right)/2\right\}$
(see Lecture 19 in \cite{rep} for detailed explanation). For
$\frs\fro_{2n}\CC$, fundamental weights are
$$L_1, L_1+L_2,\dots,L_1+\cdots+L_{n-2}, (L_1+\cdots+L_{n})/2, (L_1+\cdots+L_{n-1}-L_{n})/2$$
corresponding to irreducible representations $V,\bigwedge^2
V,\dots,\bigwedge^{n-2}V$ and the half-spin representations $S^+$
and $S^-$.
Set $t_i=e(L_i)$, $t_i^{-1}=e(-L_i)$, $t_i^{+1/2}=e(L_i/2)$,
$t_i^{-1/2}=e(-L_i/2)$, $\Char(\bigwedge^k V)$ is the $k$-th
elementary symmetric polynomial -- denoted by $D_k$ -- of the $2n$
elements $t_1$, $t_1^{-1}$, $\dots$, $t_{n}$, $t_{n}^{-1}$. The
character $D^+~(\textrm{resp. }D^-)$ of $S^+~(\textrm{resp. }S^-)$
is the sum $\sum t_1^{\pm1/2}\cdot\dots\cdot t_n^{\pm1/2}$, where
the number of plus signs is even (\textrm{resp. }odd). Thus,
$$R(\frs\fro_{2n}\CC)=\ZZ[\Lambda]^{\fW}=\ZZ[D_1,\dots,D_{n-2},D^+,D^-]$$
\end{example}
\begin{example}\cite{rep}
In the case of $\frg=\frs\fro_{2n+1}\CC$, its standard
representation is $V\cong\CC^{2n+1}$ and its weight lattice is the
same as $\frs\fro_{2n}\CC$. But the fundamental weights are
$$L_1, L_1+L_2,\dots,L_1+L_2+\cdots+L_{n-1}, (L_1+\cdots+L_{n})/2$$
corresponding to irreducible representations $V,\bigwedge^2
V,\dots,\bigwedge^{n-1}V$ and the spin representation $S$.
$\Char(\bigwedge^k V)$ here is the $k$-th elementary symmetric
polynomial -- denoted by $B_k$ -- of $2n+1$ elements $t_1,
t_1^{-1},\dots,t_{n},t_{n}^{-1}$ and $1$. Denote by $B_n$ the
character of $S$, which is the $n$-th symmetric polynomial in
variables $t_i^{\frac{1}{2}}+t_i^{-\frac{1}{2}}$.
By applying Theorem \ref{thm:character} we obtain
$$R(\frs\fro_{2n+1}\CC)=\ZZ[\Lambda]^{\fW}=\ZZ[B_1,\dots,B_{n-1},B_n]$$
\end{example}
If
$\Gamma_{\lambda}$ is an irreducible $\frs\fro_{2n+1}\CC$
representation of
 highest weight
$\lambda=(\lambda_1\geq\cdots\geq\lambda_n\geq 0)$, then its image in $\ZZ[B_1,\dots,B_n]$
is $B_1^{\lambda_1-\lambda_2}B_2^{\lambda_2-\lambda_3}\cdots B_{n-1}^{\lambda_{n-1}-\lambda_n}B_n^{\lambda_n}$. 
In general, we have $\frs\fro_{2n}\CC\subset\frs\fro_{2n+1}\CC$, and the restriction
representation of $\Gamma_{\lambda}$ is
$$\textrm{Res}^{\frs\fro_{2n+1}\CC}_{\frs\fro_{2n}\CC}\Gamma_{{\lambda}}=\oplus_{\bar{\lambda}}\Gamma_{\bar{\lambda}}$$
where $\bar{\lambda}=(\bar{\lambda}_1,\dots,\bar{\lambda}_n)$
satisfies
$${\lambda}_1\geq\bar{\lambda}_1\geq{\lambda}_2\geq\bar{\lambda}_2
\geq\cdots\geq\bar{\lambda}_{n-1}\geq{\lambda}_n\geq|\bar{\lambda}_n|~,$$
and $\bar{\lambda}_i$ and ${\lambda}_i$ are either all integers
or all half integers.

%%
%\begin{definition}
%Given a Lie algebra $\frg$ with subalgebra $\frg_1$, if $W$ is a
%representation of $\frg_1$, the \textit{induced representation of
%$\frg$} is
%$$U(\frg)\otimes_{U(\frg_1)}W,$$
%where $U(\frg)$ is the universal enveloping algebra of $\frg$.
%\end{definition}
%
Given a finite dimensional $\frs\fro_{2n+1}\CC$ representation $W$, if it
 is induced by the inclusion
$\frs\fro_{2n}\CC\subset\frs\fro_{2n+1}\CC$ and all the weights of
$\frs\fro_{2n}\CC$ representation are integer-valued, then so are
the weights of the $\frs\fro_{2n+1}\CC$ representation. This implies
that the character of the $\frs\fro_{2n+1}\CC$ representation will be in
$\ZZ[B_1,\dots,B_{n-1}]$.

\end{subsection}

\begin{subsection}{Group actions on cohomologies}\label{sec:groupcoh}
Let $V$ be a $\QQ$ vector space, and $\SS(\RR)\cong \CC^*$  regarded as a Lie group. $ S^1$ 
 is a maximal compact subgroup
of $\SS(\RR)$.

\begin{definition}\label{def:Hdg}
(\cite{GR12}, I.A)
A \textit{Hodge structure} of weight $n$ is given by 
a representation on $V_\RR:=V\otimes_\QQ \RR$
$$\tilde{\varphi}:\SS(\RR)\rightarrow\GL(V_\RR)$$
such that for $r\in\RR^*\subset\SS(\RR)$, $\tilde{\varphi}(r) = r^n id_V$.

\noindent This definition is equivalent to giving a Hodge decomposition of $V_{\CC} := V\otimes_{\QQ} \CC$, where
$$V_\CC=\oplus V^{p,q},~\textrm{and  } V^{p,q}=\bar{V}^{q,p}.$$

\noindent For $\varphi:=\tilde{\varphi}|_{S^1}$, we obtain a representation
        $$\varphi:S^1\rightarrow\SL(V)(\RR)$$ given by
        $\varphi(e^{i\theta})v=e^{i\theta(p-q)}v$, for $v\in
        V^{p,q}$.
\end{definition}

\begin{definition}\cite{GR12}
The \textit{Mumford-Tate group} $M_\varphi$ associated to a Hodge
structure $(V,\varphi)$ of weight $n$ is the  $\QQ$-algebraic closure
of \[\varphi:S^1\rightarrow\SL(V_\RR).\]
\end{definition}

\noindent For a pure Hodge structure of even weight $2p$, {\em Hodge classes} are
defined as those lying in the intersection  $V\cap V^{p,p}_{\CC}$. Let $T^{k,l}=V^{\otimes k}\otimes\check{V}^{\otimes l}$ and $Hg(V_\varphi)$ denote the direct sum of 
all Hodge classes in $T^{k,l}$ for all pairs of $(k,l)$. It is known that 
$M_\varphi$ fixes $Hg(V_\varphi)$  \cite{GR12} .

Let $V$ denote the weight two Hodge structure on $H^2(X,\ZZ)$,  Theorem 
2.2.1 in \cite{ZY83} indicates that
\begin{proposition}\label{prop:MTso}
	For generic $X$, the Mumford-Tate group $G_X=SO\left(V,\left<,\right>\right)$, where $\left<,\right>$ is the Beauville-Bogomolov form. 
\end{proposition}

\begin{definition}\label{def:monodromy}\cite{Ma08, Ma11} An automorphism $g$ of
 $H^*(X,\QQ)$ is called
a \textit{monodromy operator} (equivalently, a parallel transport operator)  if there exists a 
smooth and proper family  
$\cM\rightarrow B$ (which may depend on $g$) of irreducible
holomorphic symplectic manifolds over a (possibly singular) complex analytic space $B$,
having $X$ as a fiber over a point $b\in B$, and such that $g$
belongs to the image of $\pi_1(B, b)$ under the monodromy
representation. The monodromy group $Mon(X)\in GL(H^*(X,\QQ))$ is generated
by all the monodromy operators. In this context, the \textit{algebraic monodromy group} 
$\overline{Mon}(X)$ is defined as the smallest $\QQ$-algebraic group in $GL(H^*(X,\QQ))$ that contains 
$Mon(X)$. Denote by $\overline{Mon}^2(X)$ the image of $\overline{Mon}(X)$ in the isometry group 
of $H^2(X,\QQ)$.
\end{definition}
\begin{proposition}\label{prop: gequiv}
Let $X$ be of $K3^{[n]}$-type, and assume $X$ is a very general fibre  in the universal family $\cX\rightarrow B$. 
Let $\overline{Mon}^2_0(X)$ be the identity component of $\overline{Mon}^2(X)$, then we have $\overline{Mon}^2_0(X) = G_X$.
\end{proposition}
\begin{proof}
%Deligne in \cite{De72} shows that the Mumford-Tate group
%of the Hodge structure on $H^{2}(X,\QQ)$ contains a subgroup of
%finite index in the monodromy group.
 Theorem 16 in \cite{PE03} shows that 
any connected component of $\overline{Mon}^2(X)$ is a normal subgroup of the derived group 
of $G_X$.  In particular, we have  $\overline{Mon}^2_0(X) \subset G_X$. \\
% Since 
%$\overline{Mon}^2(X)$ is a finite index subgroup of $O^+(H^2(X,\QQ))$ by \cite{SU77} and \cite{Ver13}, $SO\left(V,\left<,\right>\right)\subset \overline{Mon}^2(X)$, and thus $SO\left(V,\left<,\right>\right)\subset\overline{Mon}^2_0(X)$. 
Consider the map $\iota: O^+\left(H^2(X,\ZZ)\right)\rightarrow O\left(H^2(X,\ZZ)^*/H^2(X,\ZZ)\right)$, 
  Lemma 4.2 in \cite{MA10} shows that 
 $Mon^2(X)$ is the inverse image of the subgroup $\{1,-1\}$ under  $\iota$. Thus  $SO\left(V,\left<,\right>\right)\subset\overline{Mon}^2_0(X)$. 
 Since $G_X=SO\left(V,\left<,\right>\right)$ by Proposition \ref{prop:MTso}, 
$\overline{Mon}^2_0(X) = G_X$.
\end{proof}
\begin{example}\label{eg:k3}
The simplest case is when $X$ is of $K3$-type. The monodromy group $\overline{Mon}^2(X)$ is $\cO^+(H^2(X,\QQ))$ 
\cite{BO86}. Thus its identity component $\overline{Mon}^2_0(X)$ is $SO(H^2(X,\QQ))$.
\end{example}
%%
%%
%%
%%$\S 4.3 $ in \cite{Ma08}  shows that $\overline{Mon}(X)$ and $\overline{Mon}^2(X)$ differ only
%%by finite subgroups. Proposition 4.1 in \cite{Br10} shows that the
%%universal cover $\tilde{G}_X \rightarrow G_X$ acts on the cohomology
%%ring of $X$. Since the cohomology of $X$ is nonzero only in even degrees, this representation passes to $G_X$. 
%%

\begin{definition}
Let $X$ be of $K3^{[n]}$-type, and assume $X$ is a very general fibre in the universal family $\cX\rightarrow B$.  
{\em Canonical Hodge classes} of $X$ are Hodge classes
that remain Hodge under any  deformation.
\end{definition}

\begin{theorem}\label{thm:canonical}
Let $X$ be of $K3^{[n]}$-type, and assume $X$ is a very general fibre over $b$ in the universal family $\cX\rightarrow B$.  
Canonical Hodge classes of $X$ are exactly the invariant classes  $H^*(X,\QQ)^{G_X}$. 
\end{theorem}
\begin{proof}
Since canonical Hodge classes are Hodge classes, they are contained in $H^*(X,\QQ)^{G_X}$. 

Given any very general fibre $X'$, let $\gamma\subset B$ be a path such that 
$\gamma(0) = b$ and $\gamma(1) = b'$ where $\cX_{b'} = X'$.
Markman \cite{Ma11} shows that $\overline{Mon}^2(X)$ is a normal subgroup of $\overline{Mon}(X)$, 
thus $\gamma\overline{Mon}^2_0(X)\bar{\gamma}\subset \overline{Mon}^2(X')$. Since $\gamma\overline{Mon}^2_0(X)\bar{\gamma}$ is connected and 
contains identity, $\gamma\overline{Mon}^2_0(X)\bar{\gamma}=\overline{Mon}^2_0(X') = G_{X'}$. 

  For any
 $\alpha\in H^*(X,\QQ)^{G_X}$, $\gamma\alpha\in  H^*(X',\QQ)$.  Given $h'\in G_{X'}$, there exists $h\in G_X$ such that $h' = \gamma h\bar{\gamma}$. This implies 
\begin{align*}
h'(\gamma\alpha) &= \gamma h\bar{\gamma}(\gamma\alpha) \\
	&= \gamma h \alpha \\
	&= \gamma\alpha
\end{align*}
%$\gamma$ defines an isomorphism 
%%
%\begin{align*}
%	\Phi_\gamma:\overline{Mon}(X) & \cong \overline{Mon}(X')\\
%	\textrm{via } g& \mapsto \gamma g\bar{\gamma}
%\end{align*}
%%
%Given $h'\in\overline{Mon}(X')$, there exists $h\in\overline{Mon}(X)$ such that $h' = \gamma h\bar{\gamma}$, then we obtain
%%
% \begin{align*}
%h'(\gamma \alpha) &= \gamma h\bar{\gamma}(\gamma\alpha) \\
%	&= \gamma h(\alpha)\\
%	& = \gamma\alpha
%\end{align*}
%%
\noindent Thus $\gamma\alpha\in H^*(X',\QQ)^{G_{X'}}$ is a Hodge class.

Now let $\cA\in H^*(\cX,\QQ)$ be the class such that $\cA_b = \alpha\in H^*(\cX_b,\QQ)$.  For any very general fibre $\cX_{b'}$,  $\cA_{b'}\in H^*(\cX_{b'},\QQ)$ is obtained by a path from $b$ to $b'$. Passing through finite \'{e}tale cover of $B$, $\cA_{b'}$ is unique. By the above analysis,  $\cA_{b}$ are Hodge for all very general fibres.  By Deligne-Cattani-Kaplan theorem in \cite{DE95},  Hodge loci of $\cA$ is closed. Thus  $\cA_{s}$ are Hodge for all special fibres $\cX_s$. Thus $\alpha$ remains Hodge under any deformation.

Thus  $H^*(X,\QQ)^{G_X}$ is the collection of all canonical Hodge classes. 
\end{proof}

Note that the same statement was proved in Lemma 3.2 of \cite{MA12}.

By the above analysis,  the invariants of the $G_X$ representation
on $H^*(X,\QQ)$ are the canonical Hodge classes. By computing 
number of trivial  $G_X$ representations on $H^{2p}(X,\QQ)$, we can obtain 
number of canonical Hodge classes of type $(p,p)$ of $X$.
\begin{example}
In Table \ref{tab:invariants}, the first row of data shows the number of
canonical Hodge classes. For instance , if $X$ is of 
$K3^{[5]}$-type, then there are 2 canonical Hodge classes in $H^{2,2}(X)$, 1 in $H^{3,3}(X)$,
 4 in $H^{4,4}(X)$, and 2 in $H^{5,5}(X)$.

\end{example}

%
%{\color{red} Example of more Chern classes}
\end{subsection}

%%%%%%%%%%%%%%%%%%%%%%%%%%%%%%%%%%%%%%%%%%%%%%new subsection

\begin{subsection}{Decomposition of the Cohomology Representation}\label{sec:cohRep}
Denote by $G_S$ the identity component of the special orthogonal
group associated with the intersection form on $H^2(S,\ZZ)$. (cf. Example \ref{eg:k3})

 We can decompose $H^*(X,\QQ)$ into irreducible representations for the
action of $G_X$.  \cite{Br10} provides an explicit method for writing the
decomposition:
%%
%%The decomposition
%%$$H^2(X,\ZZ)=\Lambda_S\oplus_{\perp}\ZZ\delta$$
%%induces an inclusion $G_S\subset G_X$.
%%
%%
%%%
\begin{enumerate}
	\item Use the isomorphism
	$H^2(X,\ZZ)\cong\Lambda_S\oplus_\perp \ZZ\delta$ and compatible
maximal torus of $G_S$ and $G_X$ to fix the embedding $G_S\subset
G_X$.  (cf. Section \ref{sec:introduction})
\item Decompose $H^*(S^{[n]},\QQ)$ into the highest-weight $G_S$ representations 
 using $H^*(S^{[n]},\QQ)[2n]\cong \left( H^*(S,\QQ)[2]\right)^{[n]}$ in Theorem \ref{thm:lehnsoger}.
 The highest-weight irreducible
representation $V_S(\lambda)$ will lie in the summand of an irreducible $G_X$ representation $V_X(\lambda)$ in $H^*(X,\QQ)$. 

\item Repeat step 1 and 2 on $H^*(X,\QQ)/V_X(\lambda)$.
\end{enumerate}

Let $\VV_{2k,n}:=H^{2k}(X,\QQ)$ where  $X$ of $K3^{[n]}$-type, $\VV_{2k,n}$ is of weight $2k-2n$ in 
$H^*(X,\QQ)[2n]$. Let $V_{\lambda}$ be the irreducible $G_{X}$ representation of the highest weight $\lambda$;
 we get the following computational results
\begin{longtable}{ |   l | l | l | l | l | l | l | l | l | l | l }
    \hline
    \tiny$\lambda$  	     & \tiny\tiny$\textrm{dim} V_{\lambda}$ &\tiny $V_{4,5}$    &\tiny $\VV_{6,5}$  & \tiny $\VV_{8,5}$ & \tiny $\VV_{10,5}$& \tiny $\VV_{6,6}$ & \tiny $\VV_{8,6}$   & \tiny $\VV_{10,6}$  & \tiny $\VV_{12,6}$   & \tiny\tiny ...\\ \hline %& \tiny $H^{28}$ & \tiny $H^{30}$ & \tiny ... \\ \hline
    \tiny$(0,0,0,\dots)$ & \tiny1                           &\tiny 2        &\tiny 1          & \tiny 4         & \tiny 2         & \tiny 2         & \tiny 5           & \tiny 4           & \tiny 7            & \tiny\tiny ...\\ \hline %& \tiny 668 & \tiny 981 & \tiny ... \\ \hline
    \tiny$(1,0,0,\dots)$ & \tiny23                          &\tiny 1        &\tiny 3          & \tiny 3         & \tiny 5         & \tiny 3         & \tiny 4           & \tiny 7           & \tiny 7            & \tiny\tiny ...  \\ \hline %& \tiny 1136 & \tiny 1804 & \tiny ...  \\ \hline
    \tiny$(2,0,0,\dots)$ & \tiny275                         &\tiny 1        &\tiny 1          & \tiny 3         & \tiny 2         & \tiny 1         & \tiny 4           & \tiny 4           & \tiny 7            & \tiny\tiny ...  \\ \hline %& \tiny 1157 & \tiny 1826 & \tiny ...   \\ \hline
    \tiny$(1,1,0,\dots)$ & \tiny253                         &           &\tiny 1          & \tiny 1         & \tiny 2         & \tiny 1         & \tiny 1           & \tiny 3           & \tiny 2            & \tiny\tiny ... \\ \hline %& \tiny 690 & \tiny 1157 & \tiny ...  \\\hline
    \tiny$(3,0,0,\dots)$ & \tiny2277                        &           &\tiny 1          & \tiny 1         & \tiny 2         & \tiny 1         & \tiny 1           & \tiny 3           & \tiny 3            & \tiny\tiny ... \\ \hline %& \tiny 870 & \tiny 1460 & \tiny ...  \\ \hline
    \tiny$(2,1,0,\dots)$ & \tiny4025                        &           &\tiny            & \tiny 1         & \tiny 1         & \tiny           & \tiny 1           & \tiny 2           & \tiny 2            & \tiny\tiny ...  \\ \hline %& \tiny 831 & \tiny 1398 & \tiny ...  \\ \hline
    \tiny$(1,1,1,\dots)$ & \tiny1771                        &           &\tiny            & \tiny           & \tiny           & \tiny           & \tiny             & \tiny             & \tiny 1            & \tiny\tiny ... \\ \hline %& \tiny 180 & \tiny 303 & \tiny ...  \\ \hline
    \tiny$(4,0,0,\dots)$ & \tiny14674                       &           &\tiny            & \tiny 1         & \tiny           & \tiny           & \tiny 1           & \tiny 1           & \tiny 2            & \tiny\tiny ... \\ \hline %& \tiny 180 & \tiny 303 & \tiny ...  \\ \hline
    \tiny$(3,1,0,\dots)$ & \tiny256795                      &           &\tiny            & \tiny           & \tiny 1         & \tiny           & \tiny             & \tiny 1           & \tiny 1            & \tiny\tiny ... \\ \hline %& \tiny 180 & \tiny 303 & \tiny ...  \\ \hline
    \tiny$(2,2,0,\dots)$ & \tiny2193763                     &           &\tiny            & \tiny           & \tiny           & \tiny           & \tiny             & \tiny             & \tiny 1            & \tiny\tiny ... \\ \hline %& \tiny 180 & \tiny 303 & \tiny ...  \\ \hline
    \tiny$(5,0,0,\dots)$ & \tiny7804350225                  &           &\tiny            & \tiny           & \tiny 1         & \tiny           & \tiny             & \tiny  1          & \tiny              & \tiny\tiny ... \\ \hline %& \tiny 180 & \tiny 303 & \tiny ...  \\ \hline
    \tiny$(4,1,0,\dots)$ & \tiny$\cdots$                &           &\tiny            & \tiny           & \tiny           & \tiny           & \tiny             & \tiny  $\cdots$          & \tiny $\cdots$            & \tiny\tiny ... \\ \hline %& \tiny 180 & \tiny 303 & \tiny ...  \\ \hline
    \tiny$(6,0,0,\dots)$ & \tiny$\cdots$                    &           &\tiny            & \tiny           & \tiny           & \tiny           & \tiny             & \tiny             & \tiny $\cdots$            & \tiny\tiny ... \\ \hline %& \tiny 180 & \tiny 303 & \tiny ...  \\ \hline
\caption{$G_X$ Representations} % needs to go inside longtable environment
\label{tab:invariants}
\end{longtable}
\noindent Here ``$\cdots$"  denotes truncated data, and each integer denotes the
number of times $V_{\lambda}$ appears in $\VV_{2k,n} = H^{2k}(X,\QQ)$. In
particular, the first row in the table indicates the number of
copies of trivial $G_X$ representation in each $H^{2k}(X,\QQ)$,
and corresponds to the number of canonical Hodge classes.

$H^2(S,\QQ)$ corresponds to the standard $G_S$ representation
$V_S(1)$.
%%  Let $\HH:=\sum_n H^*(S^{[n]},\QQ)[2n]$ and 
Let $\HH_n =H^*(S^{[n]},\QQ)[2n] $, which is a
bigraded algebra associated to $H^*(S^{[n]},\QQ)$.   By Theorem
\ref{thm:lehnsoger}, we can decompose $\HH_n$ into $G_S$
representations. By using the same convention as
Example \ref{eg:so22}, we know that
$$\Char_{G_S}(H^{2k}(S^{[n]},\RR))\in \ZZ[D_1,\dots,D_{9},D^+,D^-]~.$$
\begin{remark}	
	When $k\leq 10$,  spin representations do not appear in our
	representation as $\wedge^k V_S(1,0,\dots)$ is irreducible (Theorem 19.2 \cite{rep}). 
\end{remark}
	By compatibility of maximal tori of $G_X$ and $G_S$ (cf. Section \ref{sec:introduction}),  we have
 	\begin{align}\label{eq:char}
  	\Char_{G_S}(H^{2k}(S^{[n]},\RR)) &= \Char_{G_X}(\VV_{2k,n})
 	\end{align}

By \ref{eq:char}, we will not distinguish notations between $\Char_{G_S}$ and $\Char_{G_X}$, and will denote both by
$\Char$ in the
following discussion.
Let $$p(z)_n:=\sum_{k=-n}^{n}\Char(\VV_{2k+2n,n})\cdot
z^{2k}\in\ZZ[D_1,\dots,D_{9},D^+,D^-][z,\frac{1}{z}]$$ be the graded
character of $H^*(X,\RR)$. Now we take the sum
\begin{align}\label{eq:pzt}
p(z,t) &:= \sum_{n=0}^{\infty}p(z)_nt^n
\end{align}
following the grading of each
$\VV_{\bullet, n}$.
\begin{example}\label{eg:char}
Consider $X$  of $K3^{[3]}$-type, and $\HH_3=H^*(S^{[3]},\QQ)[6]$.
By the above analysis, we obtain
\begin{align*}
H^0(S^{[3]},\QQ)[6]&=1_S\\
H^2(S^{[3]},\QQ)[6]&=1_S\oplus V_S(1)\\
H^4(S^{[3]},\QQ)[6]&=1_S^3\oplus V_S(1)^2\oplus V_S(2)\\
H^6(S^{[3]},\QQ)[6]&=1_S^3\oplus V_S(1)^3\oplus V_S(2)\oplus V_S(1,1)\oplus V_S(3)
\end{align*}
Thus we have
%\VV_{2k+6,3}
\begin{align*}
 p(z)_3 =& \sum_{k=-3}^{3}\Char_{G_S}(H^{6+2k}(S^{[3]}))\cdot z^{2k}\\
        =& ~z^{-6}+(1+D_1) z^{-4}+(3+2D_1+D_1^2-D_2)z^{-2}\\
         & +(3+3D_1+D_1^2+D_1^3-2D_1D_2+D_3)\\
         & +z^{6}+(1+D_1) z^{4}+(3+2D_1+D_1^2-D_2)z^{2}
\end{align*}
The induced $G_X$ representations are
\begin{align*}
\VV_{0,3}=H^0(X,\QQ)[6]&=1_X\\
\VV_{2,3}=H^2(X,\QQ)[6]&=V_X(1)\\
\VV_{4,3}=H^4(X,\QQ)[6]&=1_X\oplus V_X(1)\oplus V_X(2)\\
\VV_{6,3}=H^6(X,\QQ)[6]&=1_X\oplus V_X(1)\oplus V_X(1,1)\oplus V_X(3)\\
\end{align*}
The character formula is
\begin{align*}
 p(z)_3 =& \sum_{k=-3}^{3}\Char_{G_X}(\VV_{2k+6,3})\cdot z^{2k}\\
        =& z^{-6}+B_1 z^{-4}+(1+B_1+B_1^2-B_2)z^{-2}+(1+B_1+B_2+B_1^3-2B_1B_2+B_3)\\
         & +z^{6}+B_1 z^{4}+(1+B_1+B_1^2-B_2)z^{2}
\end{align*}
and the number of canonical Hodge classes of $H^k(X,\QQ)$
corresponds to the constant term in the coefficient of $z^k$.
\end{example}
\begin{remark}
The computation in example \ref{eg:char} is reversible.
Explicitly, the number of copies of the highest-weight
representation $V_X(\lambda)$ appearing in $\VV_{n,k}$ corresponds
to the coefficient of $\Char(V_X(\lambda))$
\end{remark}
\begin{proposition}\label{prop:charGX}
The character of the $G_X$ representation is 
\begin{equation}\label{eq:pg(z,t)}
p_g(z,t):=\sum_{n=0}^{\infty}p(z)_n
t^n=\prod_{m=1}^{\infty}\frac{1}{\det(I_{24}-gt^m)}\\
\end{equation}
where $g\in T_X$ the maximal torus of $G_X$,
$I_N$ is a $N\times N$ identity matrix, and $\det(I_{24}-g
t^m)=(1-z^{-2}t^m)\cdot(1-z^2t^m)\det(I_{22}-g|_{T_S}t^m)$.
\end{proposition}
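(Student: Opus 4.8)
The plan is to compute the generating series $p_g(z,t)=\sum_n p(z)_n t^n$ directly from the description of $\HH=\bigoplus_n H^*(S^{[n]},\QQ)[2n]$ as the Fock space built out of $A=H^*(S,\QQ)[2]$, in the spirit of Göttsche's formula for the Betti numbers of Hilbert schemes. First I would recall that, by Theorem~\ref{thm:lehnsoger} together with the vector-space description $A^{[n]}=\bigoplus_{\|\alpha\|=n}\bigotimes_i\Sym^{\alpha_i}A$, the total space $\HH$ is isomorphic as a bigraded vector space to the free graded-(super)symmetric algebra on $\bigoplus_{m\ge 1} A\cdot q^m$, where the extra variable $q=t$ records the ``weight'' $m$ of the $m$-th copy (equivalently, the part size in the partition), and the $z$-grading on each copy of $A$ is the one coming from the shift by $-2,0,2$. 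In other words, each partition $\alpha=(1^{\alpha_1},2^{\alpha_2},\dots)$ of $n$ contributes $\bigotimes_m \Sym^{\alpha_m}(A)$ sitting in $t$-degree $n=\sum_m m\alpha_m$, so summing over all $n$ and all $\alpha$ turns the direct sum into an infinite tensor product $\bigotimes_{m\ge1}\Sym^{\bullet}(A\, t^m)$ of symmetric algebras.

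Next I would take $G_X$-equivariant (graded) characters of both sides. Because $g\in G_X$ is diagonalizable and acts on $A_\CC\cong H^*(S,\CC)[2]\cong \CC^{24}$ with eigenvalues that I will package into $\det(I_{24}-g\,u)$, the character of the full symmetric algebra on a single graded vector space $W$ with group action is the standard plethystic identity $\Char\bigl(\Sym^\bullet W\bigr)=1/\det(I-g|_W)$, applied here with $W=A\,t^m$, i.e. with the eigenvalue variable $u$ scaled by $t^m$. Here one must be careful with signs: the odd part of $A$ (coming from $H^1,H^3$ of $S$, which vanish for a $K3$, and from the odd shift) must be treated via the exterior algebra, but since $S$ is a $K3$ surface $H^{\mathrm{odd}}(S)=0$, so $A$ is concentrated in even total degree once the shift is accounted for and no sign subtleties survive — the only ``new'' eigenvalues beyond those on $H^2(S,\QQ)$ are $z^{2}$ and $z^{-2}$ coming from $H^0(S)[2]$ and $H^4(S)[2]$, which is exactly the asserted factorization $\det(I_{24}-g t^m)=(1-z^{-2}t^m)(1-z^2 t^m)\det(I_{22}-g|_{H^2(S,\QQ)}t^m)$. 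Taking the product over all $m\ge1$ of these single-copy characters then yields $p_g(z,t)=\prod_{m\ge1}\det(I_{24}-g t^m)^{-1}$, which is the claim; matching this against the explicit low-degree computations (e.g. Example~\ref{eg:char}) serves as a consistency check.

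The step I expect to be the main obstacle is making the identification ``$\HH\cong$ free graded-symmetric algebra on $\bigoplus_{m\ge1}A\,t^m$'' precise \emph{as a $G_X$-representation with its $z$-grading}, rather than merely as a bigraded vector space. The Lehn--Sorger isomorphism of Theorem~\ref{thm:lehnsoger} is a ring isomorphism but the $G_S\subset G_X$ action and the weight-$k$ grading that interpolates between $G_S$- and $G_X$-characters via \eqref{eq:char} need to be carried through the combinatorial bookkeeping of partitions compatibly; in particular one has to check that the $z^{2k}$ appearing in $p(z)_n$ genuinely corresponds to scaling the $H^0$- and $H^4$-eigenvalues on each Fock generator, uniformly in $m$. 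Once that compatibility is in hand, the remaining manipulation is the routine plethystic computation $\Char(\Sym^\bullet)=\prod 1/\det(I-g\,\cdot\,)$ and reindexing the product over part sizes $m$, and the factorization of $\det(I_{24}-gt^m)$ is immediate from the orthogonal decomposition \eqref{eq:decomp2,2} extended by the two hyperbolic classes $z^{\pm2}$.
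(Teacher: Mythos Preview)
Your proposal is correct and follows essentially the same route as the paper: apply Molien's formula (your ``plethystic identity'') to each copy $\Sym^\bullet(A\,t^m)$, use the Lehn--Sorger decomposition $A^{[n]}=\bigoplus_{\|\alpha\|=n}\bigotimes_i\Sym^{\alpha_i}A$ to reindex $\sum_n p(z)_n t^n$ as the product over $m\ge1$, and read off the factorization of $\det(I_{24}-gt^m)$ from the weights $z^{\pm2}$ on $H^0(S),H^4(S)$. The ``main obstacle'' you flag --- compatibility of the $G_X$-action and $z$-grading with the Fock-space identification --- is precisely what the paper settles beforehand via the character identity \eqref{eq:char}, so once that is in place the remaining computation is exactly as you describe.
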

\begin{proof}
Recall that $\VV=H^*(S,\QQ)[2]=H^0(S,\QQ)+H^2(S,\QQ)+H^4(S,\QQ)$ is
bigraded, where $H^0(S,\QQ)$ and $H^4(S,\QQ)$ are of weight $-2$ and
$+2$ respectively, and $H^2(S,\QQ)$ is of weight 0. For every symmetric power
$\Sym^k(\VV)$, it is sufficient to show the formula holds when $g$ is
diagonal. Since $G_S$ acts trivially on $H^0$ and $H^4$, let
$u_{-2}\in H^0(S,\QQ)$ be the eigenvector with eigenvalue 1 and weight
$-2$, and $u_2\in H^4(S,\QQ)$ be eigenvector with eigenvalue $1$ and
weight $2$.
 $H^2(S,\QQ)$ corresponds to the standard $G_S$ representation $V_S(1,0,\dots,0)$,
  and let $v_i~(i = 1,\dots,22)$ be
 its eigenvectors; when $i$ is even, $v_i$ has eigenvalue $t_{\frac{i}{2}}$,
  and when $i$ is odd, $v_i$ has eigenvalue
 $t_{\lfloor\frac{i}{2}\rfloor}^{-1}$. \\
Molien's Formula in \cite{St2010}  indicates that for a
representation $W$ of a group $G$, and given a linear operator $g\in
G$, its action on the symmetric algebra $\Sym^\bullet(W)$ has the graded
character
$$\sum_{i=0}^{\infty}\Char(\Sym^i(g))t^i=\frac{1}{\det(I-tg)}~.$$
The symmetric algebra on $\VV$, denoted by
 $\Sym^\bullet(\VV)$, has the form
 $$\Sym^\bullet(u_2)\otimes \Sym^\bullet(v_1)\otimes\Sym^\bullet(v_2)\otimes\cdots\otimes \Sym^\bullet(v_{22})\otimes\Sym^\bullet(u_{-2})~.$$
 Since $v_i\in \VV$ is of weight 0,
 the bigraded character of $\Sym^\bullet (v_i)$ is
  $$\sum_{k=0}^{\infty}(\mu_iz^0)^k\cdot t^k=\frac{1}{1-\mu_i t}~,$$
   where $\mu_i$ is the eigenvalue
 of $v_i$. Since $u_{-2}$ (resp. $u_2$) has weight $-2$ (resp. weight $+2$) in $\VV$, we have
\begin{align*}
 \Char(\Sym^\bullet(u_{-2}))(t) &=\sum_{k=0}^{\infty}(1\cdot z^{-2})^k t^k=\frac{1}{1-z^{-2}t} ~\textrm{, and }\\
 \Char(\Sym^\bullet(u_{2}))(t) &=\sum_{k=0}^{\infty}(1\cdot z^{2})^k t^k=\frac{1}{1-z^{2}t}~.
\end{align*}
  Thus, we obtain
  \begin{align}\label{eq:1sym}
  &\Char(\Sym^\bullet(\VV))(t)
           % =&\sum_{k=1}^{\infty}\Char(\Sym^k(\VV))~t^k \\
           % =&\Char(\Sym^\cdot(u_2))\times\Char(\Sym^\cdot(u_{-2}))\prod_i\Char \Sym^\cdot(v_i)\\
            = \frac{1}{(1-z^2t)(1-z^{-2}t)\det(I_{22}-g|_{T_S}t)}~.
  \end{align}
  %where $\Char(\Sym^d(\VV))$ is the coefficient of $t^d$.
By Theorem \ref{thm:lehnsoger}, the graded character $p(z)_n$ of
$H^*(X,\QQ)$ is given by
\begin{align*}
p(z)_n t^n= & \Char\left(\sum_{\|\alpha\|=n}\bigotimes_i\Sym^{\alpha_i}\VV  \right)t^n \\
%       = & \sum_{\|\alpha\|=n}\Char\left(\bigotimes_i\Sym^{\alpha_i}\VV  \right) t^n\\
       = & \sum_{\|\alpha\|=n}\prod_{i=1}^{n}\left(\Char\left(\Sym^{\alpha_i}\VV \right) ~ t^{i\alpha_i}\right)~.
\end{align*}

Note $\Char\left(\Sym^{\alpha_i}\VV~\right)t^{i\alpha_i}$ is the
$\alpha_i$-th term in $\Char(\Sym^\bullet(\VV))(t^i)$.
%in the infinite sum $\Char(\Sym^\cdot(\VV))(t^i)=\sum_{m=0}\Sym^{m}(\VV)t^{mi}$.
%
Then for each
$\alpha=(1^{\alpha_1},2^{\alpha_2},\dots,n^{\alpha_n})$ where
$\parallel\alpha\parallel = 1\alpha_1+\cdots + n\alpha_n = n$,
$$\prod_{i=1}^{n} \left(\Char\left(\Sym^{\alpha_i}\VV~\right)t^{i\alpha_i}
\right)$$ corresponds to the $t^n$-th term in
$$\prod_{j=1}^{n} \left(\Char(\Sym^\bullet(\VV))(t^j) \right)~.$$
Together with equation \ref{eq:1sym}, one can obtain
%In general, the coefficient
%of $t^N$ in the infinite product
%$$\prod_{m=0}^{\infty}\Char(\Sym^\cdot\VV)(t^m)$$
%is
%$$\prod_{\sum_{i=1}^{N}ik_i=N}\left( \Char \left( \Sym^{k_i}(\VV)
%\right) ~t^{ik_i} \right)$$
%
$$\sum_{n=0}^{\infty} p(z)_nt^n=\prod_{m=1}^{\infty}\frac{1}{(1-z^2t^m)(1-z^{-2}t^m)\det(I_{22}-g|_{T_S}t^m)}~.$$

\end{proof}
\begin{remark}
Given a smooth projective complex surface $S'$,  let
$p(S'^{[n]},z)$ be the Poincar{\'e} polynomial
$\sum_{i=0}^{4n}\beta_i(S'^{[n]})z^i$ of $S'^{[n]}$. 
G\"{o}ttsche \cite{got90} shows that $\sum_{n=0}p(S'^{[n]},z)q^n $ has the expression
    \begin{align}\label{eq:gosttche}
         \prod_{m=1}^{\infty}
            \frac{(1+z^{2m-1}q^m)^{b_1(S')}(1+z^{2m+1}q^m)^{b_{1}(S')}}
            {(1-z^{2m-2}q^m)^{b_0(S')}(1-z^{2m}q^m)^{b_{2}(S')}(1-z^{2m+2}q^m)^{b_4(S')}}~.
    \end{align}
For the case when $S'$ is a $K3$ surface, $b_0(S') = b_4(S') = 1$, $b_1(S')=b_3(S')=0$, and 
$b_2(S')=22$. Letting $z^2q=t$, Equation \ref{eq:gosttche} becomes
    \begin{align*}
         \prod_{m=1}^{\infty}
            \frac{1}
            {(1-z^{-2}t^m)(1-t^m)^{22}(1-z^{2}t^m)}~.
    \end{align*}
\noindent This is the same as taking the character of the  identity in Proposition \ref{prop:charGX}.
\end{remark}
%
% \begin{remark}
% %
% We can also apply Weyl measure to obtain the graded generating
% series $\HH^{G_S}$ by the following integral
% $$\frac{1}{\pi^{11}}\int_{0}^{\pi}\cdots\int_0^\pi\left( \left( \prod_{l=1}^{11}(1-\cos2(\theta_l))\right) p(z)\right) d\theta_1\cdots d\theta_{11}$$
% where $e^{i\theta_l}=t_{2l-1}$.
% % check mukai page 142, weyl measure
% \end{remark}
%%%%%%%%%%%%%%%%%%%%%%%%%%%%%%%%%%%%%%%%%%%%%%%%%%%%%%%%%%%%%%%%%%%%%%%%%%%%%%%%%%%%%%%

%%%%%%%%%%%%%%%%%%%%%%%%%%%%%%%%%%%%%%%%%%%%%%%%%%%%%%%%%%%%%%%%%%%%%%%%%%%%%%%%%%%%%%%

\end{subsection}

%%%%%%%%%%%%%%%%%%%%%%%%%%%%%%%%%%%%%%%%%%%%%%end subsection\#

%%%%%%%%%%%%%%%%%%%%%%%%%%%%%%%%%%%%%%%%%%%%%%new section
%
%
%
%

\section{Generating Series for the Character of the Middle Cohomology}\label{sec:5}
\noindent \textbf{ Proof of Theorem \ref{thm:middim}:} For $X$ is of  $K3^{[n]}$-type, we consider the middle cohomology $H^{2n}(X,\QQ)$ of $X$. 
The character of $H^{2n}(X,\QQ)$ is of weight $0$ in Equation
\ref{eq:pg(z,t)}  in Proposition \ref{prop:charGX}. %and it is the coefficient of $z^0q^n$.
%The weight
%is determined by the representation on $H^0(S,\QQ)$ and
% $H^4(S,\QQ)$.
%% given by
%% $$\frac{1}{(1-z^{-2}q^m)(1-z^2q^m)\det(I-g|_{H^2(S,\QQ)}q^m)}$$
%Extract out the weight-controlling part the representation proposition \ref{prop:charGX}, 
The character formula  can be written as
\begin{equation}\label{eq:wt}
\left(\prod_{m=1}\frac{(1-q^m)^2}{(1-z^{-2}q^m)(1-z^2q^m)}\right) \left( \prod_{k=1}(1-q^k)^2\cdot\det(I-g|_{T_S}q^k)\right)^{-1}~.
\end{equation}
\noindent Lemma 1 in \cite{AG84} indicates
\begin{equation}\label{eq:hecke}
\prod_{m=1}\frac{(1-q^m)^2}{(1-z^{-2}q^{m-1})(1-z^2q^{m})} = 
      \sum_{N,r =-\infty\atop~r\geq|N|}^{\infty}(-1)^{r+N}z^{2N} q^{\frac{r^2-N^2+r+N}{2}}~.
\end{equation}
\noindent Multiplying  both sides of Equation \ref{eq:hecke} by $(1-z^{-2})$, we obtain
\begin{align*}
    & \prod_{m=1}\frac{(1-q^m)^2}{(1-z^{-2}q^m)(1-z^2q^m)}  \\
=   & \sum_{N,r =-\infty\atop~r\geq|N|}^{\infty}(-1)^{r+N} q^{\frac{r^2-N^2+r+N}{2}} \left(z^{2N}-z^{2(N-1)}\right)\\
=   & \sum_{N=-\infty}^{\infty}\left( 
        \sum_{l=0 \atop r=|N|+l}^{\infty}(-1)^{r+N} q^{\frac{r^2-N^2+r+N}{2}}-\sum_{l=0 \atop r=|N+1|+l}^{\infty} (-1)^{r+N+1}q^{\frac{r^2-(N+1)^2+r+N+1}{2}}\right)
        z^{2N}~.\\
\end{align*}
\noindent Fix $N\geq 0$, the coefficient of $z^{2N}$ is
\begin{equation}\label{eq:akak}
    \sum_{l=1}^{\infty}(-1)^{l+1} q^{\frac{l(l-1+2N)}{2}}-\sum_{l=1}^{\infty} (-1)^{l+1}q^{\frac{l\left(l-1+2(N+1)\right)}{2}}~.
\end{equation}
Let $a_k(q)=\sum_{l=1} (-1)^{l+1} ~ q^{\frac{l(l-1+2k)}{2}}$ for $k\geq 0$,  the coefficient of 
$z^{2k}$  can be written as $a_{k}(q)-a_{k+1}(q)$. In particular,
the coefficient of $z^0$ is $$a_0(q)-a_1(q)=1+\sum_{l=1}2(-1)^l
q^{\frac{l(l+1)}{2}}~.$$  Thus the coefficient of $z^0$ in Equation \ref{eq:wt} is 
$$\frac{1+\sum_{l=1}2(-1)^lq^{\frac{l(l+1)}{2}}}{\det(I-gq^k)}$$
 where
$\det(I-gq^k)=(1-q^k)^2\cdot\det(I-g|_{T_S}q^k)~.$

\begin{corollary} Let $\beta_{i}(S^{[n]})$ denote the $i$-th Betti number of $S^{[n]}$. We have 
$$ \sum_{n=0}^{\infty} \beta_{2n+2k}(S^{[n]}) ~q^n
    = \frac{q}{\Delta(q)}(a_{k}(q)-a_{k+1}(q)),~k\geq 0
$$
where $\Delta(q)=q\prod_m(1-q^m)^{24}$ is a cusp form of weight 12
for $SL_2(\mathbb{Z})$, and $a_k(q)=\sum_{l=1} (-1)^{l+1} ~q^{\frac{l(l-1+2k)}{2}}$.
\end{corollary}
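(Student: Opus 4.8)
The plan is to derive the corollary as a straightforward consequence of Theorem~\ref{thm:middim} together with G\"ottsche's formula and the standard fact about the modular discriminant. First I would record that, by the remark following Proposition~\ref{prop:charGX}, specializing $g$ to the identity in equation~\eqref{eq:pg(z,t)} recovers G\"ottsche's generating series for the Betti numbers of $S^{[n]}$; in particular, setting $g=\id$ turns $\det(I-gq^k)$ into $(1-q^k)^{24}$, since the $24$ eigenvalues of $g$ on $H^0\oplus H^2\oplus H^4$ all become $1$ (this uses $b_0(S)=b_4(S)=1$ and $b_2(S)=22$ for a $K3$ surface). Hence $\prod_{k\ge 1}\det(I-gq^k)$ specializes to $\prod_{k\ge1}(1-q^k)^{24}=\Delta(q)/q$, which is where the cusp form enters.

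Next I would take the dimension specialization of the \emph{weighted} identity that appears in the proof of Theorem~\ref{thm:middim}. That proof isolates the weight-controlling factor
\[
\left(\prod_{m\ge1}\frac{(1-q^m)^2}{(1-z^{-2}q^m)(1-z^2q^m)}\right)\left(\prod_{k\ge1}\det(I-gq^k)\right)^{-1},
\]
and identifies the coefficient of $z^{2k}$ in the first factor as $a_k(q)-a_{k+1}(q)$, where $a_k(q)=\sum_{l\ge1}(-1)^{l+1}q^{l(l+2k-1)/2}$. Taking $g=\id$ in the second factor and reading off the coefficient of $z^{2k}$ on both sides, the left-hand side becomes $\sum_{n}\dim H^{2n+2k}(X,\QQ)\,q^n$ (the $z^{2k}$-graded piece counts classes in degree $2n+2k$ inside $\HH_n$), while the right-hand side becomes $(a_k(q)-a_{k+1}(q))\big/\prod_{k\ge1}(1-q^k)^{24}$. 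Substituting $\prod_{k\ge1}(1-q^k)^{24}=\Delta(q)/q$ yields
\[
\sum_{n=0}^{\infty}\dim H^{2n+2k}(X,\QQ)\,q^n=\frac{q}{\Delta(q)}\bigl(a_k(q)-a_{k+1}(q)\bigr),
\]
which is exactly the claimed formula. The assertion that $\Delta(q)=q\prod_k(1-q^k)^{24}$ is a weight-$12$ cusp form for $SL_2(\ZZ)$ is classical and needs only to be cited.

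I expect the only real subtlety to be bookkeeping of the grading conventions: one must check that the coefficient of $z^{2k}$ in $p_g(z,t)$ with $g=\id$ genuinely equals $\sum_n\dim H^{2n+2k}(X,\QQ)\,q^n$ rather than, say, a shifted or reindexed version, since the shift-by-$2n$ weighting in $\HH_n=H^*(S^{[n]},\QQ)[2n]$ makes the variable $z$ track $\deg-2n$ rather than $\deg$. Once that identification is pinned down (it follows directly from the definitions of $p(z)_n$, $\VV_{n,d}$, and $p(z,t)$ in Section~3.3), the corollary is immediate; there is no genuine obstacle beyond this normalization check and invoking the modularity of $\Delta$.
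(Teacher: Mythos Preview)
Your proposal is correct and matches the paper's approach: the corollary is stated without proof immediately after Theorem~\ref{thm:middim}, and is meant to follow exactly as you describe, by specializing $g=\id$ in the $z^{2k}$-coefficient identification from that proof and recognizing $\prod_{k\ge1}(1-q^k)^{24}=\Delta(q)/q$. Your attention to the grading bookkeeping is appropriate and is the only point requiring care.
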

\begin{proof}
 The corollary follows from the proof of Theorem \ref{thm:middim} (see Equation \ref{eq:akak}) by taking the trivial representation.
\end{proof}

\begin{remark} G\"{o}ttsche \cite{got90} shows that the generating series
for the Euler numbers of $S^{[n]}$ is
$$\sum_{n=0}^{\infty}e(S^{[n]})q^n=\frac{q}{\Delta(q)}~.$$
According to \cite{Zagier} and remark 3.7 of Appendix in
\cite{Hirzebruch} we have
\[\Delta(q) ={4096\epsilon(\delta^2-\epsilon)^2}\]
where $\epsilon$ and $\delta$ are modular forms for $\Gamma_0(2)$ of
weights $4$ and $2$ with the following forms:
\[\epsilon = \sum_{n=1}^{\infty} \left( \sum_{d|n, ~\frac{n}{d}\textrm{ odd}} d^3 \right)q^n\]
\[\delta = -\frac{1}{8}-3\sum_{n=1}^{\infty}\left(\sum_{d|n, ~d\textrm{ odd}} d \right)q^n\]\\
It is interesting that the modular form $\Delta$ appears many
times in computations related to the cohomology rings of
$S^{[n]}$.
\end{remark}
%
%
%For example,  we have:
%\begin{align*}
% a_0(q) &= 1-q+q^3-q^6+q^{10}-q^{15}+\cdots\\
% a_1(q) &= q-q^3+q^6-q^{10}+q^{15}-\cdots\\
% a_2(q) &= q^2-q^5+q^9-q^{14}+q^{20}-\cdots\\
% a_3(q) &= q^3-q^7+q^{12}-q^{18}+q^{25}-q^{33}+\cdots\\
%\end{align*}
%

%%%%%%%%%%%%%%%%%%%%%%%%%%%%%%%%%%%%%%%%%%%%%% Bib
%
%
%
%

\appendix
\section{Table of $G_X$ Representations}\label{app:t1}
\noindent The following computations carried  out using MAGMA \cite{Magma}. Table \ref{table:GXAlL} records $H^*(X,\QQ)$ as a decomposition of $G_X$ representations,
where $X$ is of $K3^{[n]}$-type. $\lambda$ in each row denotes the highest weight of the $G_X$ representation,  
and $\VV_{k,n}$ in each column denotes $H^k(X,\QQ)$.
 Each integer datum indicates the number of copies of the highest-weight representation $V_X(\lambda)$ in  $H^k(X,\QQ)$.
\begin{landscape}
\begin{table}[h]
  \resizebox{1\textwidth}{!}{\begin{minipage}{\textwidth}
\begin{tabular}{| l | l | l | l | l | l | l | l | l | l | l | l | l | l | l | l | l}
    \hline
    \tiny$\lambda$    & \tiny $\VV_{8,7}$ & \tiny $\VV_{10,7}$  & \tiny $\VV_{12,7}$  & \tiny $\VV_{14,7}$  & \tiny $\VV_{8,8}$  & \tiny $\VV_{10,8}$   & \tiny $\VV_{12,8}$  & \tiny $\VV_{14,8}$  & \tiny $\VV_{16,8}$  & \tiny $\VV_{10,9}$  & \tiny $\VV_{12,9}$  & \tiny $\VV_{14,9}$  & \tiny $\VV_{16,9}$  & \tiny $\VV_{18,9}$  & \tiny\tiny ...\\ \hline %& \tiny $H^{28}$ & \tiny $H^{30}$ & \tiny ... \\ \hline
    \tiny$(0,0,0,..)$ & \tiny 5     & \tiny 5       & \tiny 10      & \tiny 7       & \tiny 6          & \tiny 6        & \tiny 13      & \tiny 12      & \tiny 18      & \tiny 6       & \tiny 15      & \tiny 15      & \tiny 25      & \tiny 21      & \tiny\tiny ...\\ \hline %& \tiny 668 & \tiny 981 & \tiny ... \\ \hline
    \tiny$(1,0,0,..)$ & \tiny 5     & \tiny 9       & \tiny 11      & \tiny 14      & \tiny 5          & \tiny 10       & \tiny 14      & \tiny 21      & \tiny 21      & \tiny 11      & \tiny 16      & \tiny 27      & \tiny 33      & \tiny 39      & \tiny\tiny ...  \\ \hline %& \tiny 1136 & \tiny 1804 & \tiny ...  \\ \hline
    \tiny$(2,0,0,..)$ & \tiny 4     & \tiny 5       & \tiny 10      & \tiny 9       & \tiny 4          & \tiny 6        & \tiny 13      & \tiny 15      & \tiny 21      & \tiny 6       & \tiny 14      & \tiny 19      & \tiny 31      & \tiny 30      & \tiny\tiny ...  \\ \hline %& \tiny 1157 & \tiny 1826 & \tiny ...   \\ \hline
    \tiny$(1,1,0,..)$ & \tiny 1     & \tiny 4       & \tiny 4       & \tiny 7       & \tiny 1          & \tiny 4        & \tiny 5       & \tiny 10      & \tiny 9       & \tiny 4       & \tiny 6       & \tiny 13      & \tiny 15      & \tiny 21      & \tiny\tiny ... \\ \hline %& \tiny 690 & \tiny 1157 & \tiny ...  \\\hline
    \tiny$(3,0,0,..)$ & \tiny 1     & \tiny 4       & \tiny 5       & \tiny 7       & \tiny 1          & \tiny 4        & \tiny 6       & \tiny 11      & \tiny 11      & \tiny 4       & \tiny 7       & \tiny 14      & \tiny 18      & \tiny 24      & \tiny\tiny ... \\ \hline %& \tiny 870 & \tiny 1460 & \tiny ...  \\ \hline
    \tiny$(2,1,0,..)$ & \tiny 1     & \tiny 2       & \tiny 4       & \tiny 5       & \tiny 1          & \tiny 2        & \tiny 5       & \tiny 8       & \tiny 10      & \tiny 2       & \tiny 5       & \tiny 10      & \tiny 16      & \tiny 18      & \tiny\tiny ...  \\ \hline %& \tiny 831 & \tiny 1398 & \tiny ...  \\ \hline
    \tiny$(1,1,1,..)$ & \tiny       & \tiny         & \tiny 1       & \tiny         & \tiny            & \tiny          & \tiny 1       & \tiny 1       & \tiny 2       & \tiny         & \tiny 1       & \tiny 1       & \tiny 3       & \tiny 3       & \tiny\tiny ... \\ \hline %& \tiny 180 & \tiny 303 & \tiny ...  \\ \hline
    \tiny$(4,0,0,..)$ & \tiny 1     & \tiny 1       & \tiny 3       & \tiny 3       & \tiny 1          & \tiny 1        & \tiny 4       & \tiny 5       & \tiny 8       & \tiny 1       & \tiny 4       & \tiny 6       & \tiny 12      & \tiny 11      & \tiny\tiny ... \\ \hline %& \tiny 180 & \tiny 303 & \tiny ...  \\ \hline
    \tiny$(3,1,0,..)$ & \tiny       & \tiny 1       & \tiny 2       & \tiny 3       & \tiny            & \tiny 1        & \tiny 2       & \tiny 5       & \tiny 5       & \tiny 1       & \tiny 2       & \tiny 6       & \tiny 9       & \tiny 13      & \tiny\tiny ... \\ \hline %& \tiny 180 & \tiny 303 & \tiny ...  \\ \hline
    \tiny$(2,2,0,..)$ & \tiny       & \tiny         & \tiny 1       & \tiny         & \tiny            & \tiny          & \tiny 1       & \tiny 1       & \tiny 3       & \tiny         & \tiny 1       & \tiny 1       & \tiny 4       & \tiny 3       & \tiny\tiny ... \\ \hline %& \tiny 180 & \tiny 303 & \tiny ...  \\ \hline
    \tiny$(2,1,1,..)$ & \tiny       & \tiny         & \tiny         & \tiny 1       & \tiny            & \tiny          & \tiny         & \tiny 1       & \tiny 1       & \tiny         & \tiny         & \tiny 1       & \tiny 2       & \tiny 3       & \tiny\tiny ... \\ \hline %& \tiny 180 & \tiny 303 & \tiny ...  \\ \hline
    \tiny$(5,0,0,..)$ & \tiny       & \tiny 1       & \tiny 1       & \tiny 2       & \tiny            & \tiny 1        & \tiny 1       & \tiny 3       & \tiny 3       & \tiny 1       & \tiny 1       & \tiny 4       & \tiny 5       & \tiny 8       & \tiny\tiny ... \\ \hline %& \tiny 180 & \tiny 303 & \tiny ...  \\ \hline
    \tiny$(4,1,0,..)$ & \tiny       & \tiny         & \tiny 1       & \tiny 1       & \tiny            & \tiny          & \tiny 1       & \tiny 2       & \tiny 3       & \tiny         & \tiny 1       & \tiny 2       & \tiny 5       & \tiny 6       & \tiny\tiny ... \\ \hline %& \tiny 180 & \tiny 303 & \tiny ...  \\ \hline
    \tiny$(3,2,0,..)$ & \tiny       & \tiny         & \tiny         & \tiny 1       & \tiny            & \tiny          & \tiny         & \tiny 1       & \tiny 1       & \tiny         & \tiny         & \tiny 1       & \tiny 2       & \tiny 3       & \tiny\tiny ... \\ \hline %& \tiny 180 & \tiny 303 & \tiny ...  \\ \hline
    \tiny$(3,1,1,..)$ & \tiny       & \tiny         & \tiny         & \tiny         & \tiny            & \tiny          & \tiny         & \tiny         & \tiny 1       & \tiny         & \tiny         & \tiny         & \tiny 1       & \tiny 1       & \tiny\tiny ... \\ \hline %& \tiny 180 & \tiny 303 & \tiny ...  \\ \hline
    \tiny$(2,2,1,..)$ & \tiny       & \tiny         & \tiny         & \tiny         & \tiny            & \tiny          & \tiny         & \tiny         & \tiny         & \tiny         & \tiny         & \tiny         & \tiny         & \tiny 1       & \tiny\tiny ... \\ \hline %& \tiny 180 & \tiny 303 & \tiny ...  \\ \hline
    \tiny$(6,0,0,..)$ & \tiny       & \tiny         & \tiny 1       & \tiny         & \tiny            & \tiny          & \tiny 1       & \tiny 1       & \tiny 2       & \tiny         & \tiny 1       & \tiny 1       & \tiny 3       & \tiny 3       & \tiny\tiny ... \\ \hline %& \tiny 180 & \tiny 303 & \tiny ...  \\ \hline
    \tiny$(5,1,0,..)$ & \tiny       & \tiny         & \tiny         & \tiny 1       & \tiny            & \tiny          & \tiny         & \tiny 1       & \tiny 1       & \tiny         & \tiny         & \tiny 1       & \tiny 2       & \tiny 3       & \tiny\tiny ... \\ \hline %& \tiny 180 & \tiny 303 & \tiny ...  \\ \hline
    \tiny$(4,2,0,..)$ & \tiny       & \tiny         & \tiny         & \tiny         & \tiny            & \tiny          & \tiny         & \tiny         & \tiny 1       & \tiny         & \tiny         & \tiny         & \tiny 1       & \tiny 1       & \tiny\tiny ... \\ \hline %& \tiny 180 & \tiny 303 & \tiny ...  \\ \hline
    \tiny$(4,1,1,..)$ & \tiny       & \tiny         & \tiny         & \tiny         & \tiny            & \tiny          & \tiny         & \tiny         & \tiny         & \tiny         & \tiny         & \tiny         & \tiny         & \tiny 1       & \tiny\tiny ... \\ \hline %& \tiny 180 & \tiny 303 & \tiny ...  \\ \hline
    \tiny$(3,3,0,..)$ & \tiny       & \tiny         & \tiny         & \tiny         & \tiny            & \tiny          & \tiny         & \tiny         & \tiny         & \tiny         & \tiny         & \tiny         & \tiny         & \tiny 1       & \tiny\tiny ... \\ \hline %& \tiny 180 & \tiny 303 & \tiny ...  \\ \hline
    \tiny$(7,0,0,..)$ & \tiny       & \tiny         & \tiny         & \tiny 1       & \tiny            & \tiny          & \tiny         & \tiny 1       & \tiny         & \tiny         & \tiny         & \tiny 1       & \tiny 1       & \tiny 2       & \tiny\tiny ... \\ \hline %& \tiny 180 & \tiny 303 & \tiny ...  \\ \hline
    \tiny$(6,1,0,..)$ & \tiny       & \tiny         & \tiny         & \tiny         & \tiny            & \tiny          & \tiny         & \tiny         & \tiny 1       & \tiny         & \tiny         & \tiny         & \tiny 1       & \tiny 1       & \tiny\tiny ... \\ \hline %& \tiny 180 & \tiny 303 & \tiny ...  \\ \hline
    \tiny$(5,2,0,..)$ & \tiny       & \tiny         & \tiny         & \tiny         & \tiny            & \tiny      & \tiny         & \tiny         & \tiny         & \tiny         & \tiny         & \tiny         & \tiny         & \tiny 1       & \tiny\tiny ... \\ \hline %& \tiny 180 & \tiny 303 & \tiny ...  \\ \hline
    \tiny$(8,0,0,..)$ & \tiny       & \tiny         & \tiny         & \tiny         & \tiny            & \tiny      & \tiny         & \tiny         & \tiny 1       & \tiny         & \tiny         & \tiny         & \tiny 1       & \tiny         & \tiny\tiny ... \\ \hline %& \tiny 180 & \tiny 303 & \tiny ...  \\ \hline
    \tiny$(7,1,0,..)$ & \tiny       & \tiny         & \tiny         & \tiny         & \tiny            & \tiny          & \tiny         & \tiny         & \tiny         & \tiny         & \tiny         & \tiny         & \tiny         & \tiny 1       & \tiny\tiny ... \\ \hline %& \tiny 180 & \tiny 303 & \tiny ...  \\ \hline
    \tiny$(9,0,0,..)$ & \tiny       & \tiny         & \tiny         & \tiny         & \tiny            & \tiny          & \tiny         & \tiny         & \tiny         & \tiny         & \tiny         & \tiny         & \tiny         & \tiny 1       & \tiny\tiny ... \\ \\  %& \tiny 180 & \tiny 303 & \tiny ...  \\ \hline
\end{tabular}
\caption{{$G_X$ Representations}} % needs to go inside longtable environment
\label{table:GXAlL}
\end{minipage} }
\end{table}
\end{landscape}
\begin{remark} \cite{NAK99} and \cite{QIN03} discuss the stable cohomology of $S^{[n]}$. 
  Here we only listed data for unstable parts. 
  In general, the ring structure of $H^*(S^{[n]},\QQ)$ is still unknown using irreducible representations. Section 5 in \cite{ME02} 
  gives partial results on relations.
\end{remark}
%%%%%%%%%%%%%%%%%%%%%%%%%%%%%%%%%%%%%%%%%%%%%% Bib
%
%
%
%
\section{Number of Canonical Hodge Classes}\label{app:t2}
\noindent The following computations were done using MAGMA \cite{Magma}. 
In Table \ref{table:trivial}, each row is indexed by $n$ -- corresponding to $X$ which is of $K3^{[n]}$-type;
each column is indexed by $k$ -- corresponding to the $k$-th cohomology group $H^{k}(X,\QQ)$. 
Since all the odd cohomologies vanish, we only listed the even values of $k$. 
Each integer datum in Table \ref{table:trivial} refers to the number of canonical Hodge classes in  $H^{k}(X,\QQ)$, i.e. the number of copies
trivial $G_X$ representation. \\
\begin{tikzpicture}
 \matrix (mymatrix) [matrix of nodes,
            nodes in empty cells,
%text depth=.5ex,
text height=1.5ex,
text width=3ex
            ]
{\hline
\begin{scope} \tikz\node[overlay] at (1ex,-0.4ex){\footnotesize n};
\tikz\node[overlay] at (2ex,0.5ex){\footnotesize k}; \end{scope} 
    & 2 & 4 & 6 & 8 & 10 & 12 & 14 & 16 & 18 & 20 & 22 & 24 & 26 & 28 & 30 & 32 &  $\cdots$     \\ \hline
2   & 0 & 1 & \\
3   & 0 & 1 & 1\\
4   & 0 & 2 & 1 & 3\\
5   & 0 & 2 & 1 & 4 & 2  \\
6   & 0 & 2 & 2 & 5 & 4  &  7 \\
7   & 0 & 2 & 2 & 5 & 5  & 10 & 7\\
8   & 0 & 2 & 2 & 6 & 6  & 13 & 12 & 18 \\
9   & 0 & 2 & 2 & 6 & 6  & 15 & 15 & 25 & 21 \\
10  & 0 & 2 & 2 & 6 & 7  & 16 & 18 & 33 & 33 & 43 \\
11  & 0 & 2 & 2 & 6 & 7  & 16 & 20 & 37 & 42 & 61 & 56 \\
12  & 0 & 2 & 2 & 6 & 7  & 17 & 21 & 41 & 51 & 79 & 84 & 104\\
13  & 0 & 2 & 2 & 6 & 7  & 17 & 21 & 43 & 55 & 91 &108 & 146& 138\\
14  & 0 & 2 & 2 & 6 & 7  & 17 & 22 & 44 & 59 & 101& 129& 188& 205& 238\\
15  & 0 & 2 & 2 & 6 & 7  & 17 & 22 & 44 & 61 & 106& 142& 219& 262& 335& 333\\
16  & 0 & 2 & 2 & 6 & 7  & 17 & 22 & 45 & 62 & 110& 152& 244& 312& 432& 480&538 &\\
17  & $\cdots$\\
};
%vertical and diagonal line
\draw (mymatrix-1-2.north west) -- (mymatrix-17-2.south west);
\draw (mymatrix-1-3.north west) -- (mymatrix-16-3.south west);
\draw (mymatrix-1-4.north west) -- (mymatrix-16-4.south west);
\draw (mymatrix-1-5.north west) -- (mymatrix-16-5.south west);
\draw (mymatrix-1-6.north west) -- (mymatrix-16-6.south west);
\draw (mymatrix-1-7.north west) -- (mymatrix-16-7.south west);
\draw (mymatrix-1-8.north west) -- (mymatrix-16-8.south west);
\draw (mymatrix-1-9.north west) -- (mymatrix-16-9.south west);
\draw (mymatrix-1-10.north west) -- (mymatrix-16-10.south west);
\draw (mymatrix-1-11.north west) -- (mymatrix-16-11.south west);
\draw (mymatrix-1-12.north west) -- (mymatrix-16-12.south west);
\draw (mymatrix-1-13.north west) -- (mymatrix-16-13.south west);
\draw (mymatrix-1-14.north west) -- (mymatrix-16-14.south west);
\draw (mymatrix-1-15.north west) -- (mymatrix-16-15.south west);
\draw (mymatrix-1-16.north west) -- (mymatrix-16-16.south west);
\draw (mymatrix-1-17.north west) -- (mymatrix-16-17.south west);
\draw (mymatrix-1-18.north west) -- (mymatrix-16-18.south west);
\draw (mymatrix-1-1.north west) -- (mymatrix-1-1.south east);%n k diagonal line
\end{tikzpicture}

\begin{table}[h]
\caption{Number of Canonical Hodge Classes} % needs to go inside longtable environment
\label{table:trivial}
\end{table}
\begin{remark} For a fixed $k$ and for $n\geq k$, 
	the number of copies of trivial $G_X$ representations stabilizes. This also can be seen 
	by results in stable cohomology of Hilbert schemes of points on $K3$ surfaces (\cite{NAK99}, \cite{QIN03}).
\end{remark}
\begin{remark}
The Beauville-Bogomolov class $\alpha_X$ and the Chern classes of the tangent bundle are canonical Hodge classes, but
	there are more canonical Hodge classes in addition to these. 
\end{remark}
\begin{example} Consider $X$ of $K3^{[8]}$-type. There are 6 canonical Hodge classes in $H^8(X,\QQ)$ according to the table.  On the other hand, there are only 4 canonical Hodge classes which can be expressed in terms of Chern classes and $\alpha_X$, namely 
	 $$c_4(T_X), ~c_2^2(T_X),~ c_2\alpha_X, ~\alpha_X^2~.$$  Future work will be devoted to finding algebraic expression of all canonical Hodge classes i.e. to express these classes as polynomials in Chern classes of certain coherent sheaves.
\end{example}

\bibliographystyle {plain}
\bibliography{IHPK}

\end{document}